\documentclass[reqno,10pt]{amsart}

\usepackage{style}

\title{A note on the recovery sequence in the double gradient model for phase transitions}

\author[J. Deutsch]{Jakob Deutsch} 
\address[Jakob Deutsch]{TU Wien}
\email{jakob.deutsch@tuwien.ac.at}

\addbibresource{references.bib}

\begin{document} 

\begin{abstract}
    We investigate the $\limsup$ inequality in the double gradient model for phase transitions governed by a Modica--Mortola functional with a double-well potential in two dimensions. Specifically, we consider energy functionals of the form
    \[
        E_\varepsilon(u, \Omega) = \int_\Omega \left( \frac{1}{\varepsilon} W(\nabla u) + \varepsilon |\nabla^2 u|^2 \right) dx
    \]
    for maps $ u \in H^2(\Omega; \mathbb{R}^2) $, where $ W $ vanishes only at two wells. Assuming a bound on the optimal profile constant --- namely the cell problem on the unit cube --- in terms of the geodesic distance between the two wells, we characterise the limiting interfacial energy via periodic recovery sequences as $\varepsilon \to 0^+$.
\end{abstract}

\maketitle

\textit{Keywords:} solid-solid phase transitions, singular perturbations, $\Gamma$-convergence, sharp-interface model

\textit{Mathematics Subject Classification (2020):} 35Q74, 49J45, 49Q20, 74N99

\setlength\parindent{12pt}
\section{Introduction}

The study of diffuse interface models for phase transitions has been central to the calculus of variations and mathematical materials science for decades. The seminal Modica--Mortola functional 
\[
    F_\eps(u, \Omega) := \int_{\Omega} \frac{1}{\eps}W(u) + \eps|\nabla u|^2 \, dx,
\]
and its variants provide a classical variational description of liquid-liquid phase separation. The sharp interface limit of such functionals, obtained via $\Gamma$-convergence, yields surface energies that concentrate on $(n-1)$-dimensional interfaces and is now a standard cornerstone of the field \cite{Modica1987, ModicaMortola1977, Gurtin1987}. There is a vast literature connecting singularly perturbed energies to geometric variational problems that has inspired numerous generalisations, among which we mention some to the vectorial \cite{FonsecaTartar1989, Baldo1990, Bouchitte1990, CristoferiGravina2021} and higher-order settings \cite{FonsecaMantegazza2000,   ChermisiDalMasoFonsecaLeoni2011, BruscaDonatiSolci2025}. 

Considering analogous variational models dealing with phase transformations in solids, the deformations are vectorial, and thus the relevant variables are matrix valued. A prototypical model in this direction is the double-gradient Modica--Mortola functional
\[
    E_\varepsilon(u,\Omega)=\int_\Omega\!\Big(\frac{1}{\varepsilon}\,W(\nabla u)+\varepsilon\,|\nabla^2 u|^2\Big)\,dx, 
\]
with a simply connected Lipschitz domain $\Omega \subset \R^2$, a deformation $u \colon \Omega \to \R^2$, a non-negative potential $W \colon \mathbb{R}^{2\times2}\to[0,\infty)$ vanishing on a finite set of possibly set-valued wells, and a parameter $\eps > 0$ representing the thickness of transitional interfaces. The authors {\sc S. Conti}, {\sc I. Fonseca} and {\sc G. Leoni} \cite{ContiFonsecaLeoni2002} established a $\Gamma$-convergence result for this class of functionals and derived a cell formula based on periodic functions for the limiting interfacial energy density for two single point wells. While the $\Gamma$-$\liminf$ inequality was proven in a very general setting, the construction of the recovery sequence requires making certain restrictive assumptions on the potential. We remark that there has also been an extensive effort to incorporate frame-indifference into the potential. In the celebrated results of \cite{ContiSchweizer2006lin, ContiSchweizer2006nonlin, ContiSchweizer2006nonlinimp} the $\Gamma$-limit was computed in two dimensions (cf.\ also \cite{StinsonKerrek2021}). The problem in higher dimensions in its full generality remains open, although significant advances have recently been made in \cite{DavoliFriedrich2020, DavoliFriedrich2025}. 

A central object in studying the $\Gamma$-limit is the optimal profile constant, which is computed by taking the $\Gamma$-$\liminf$ on the unit cube with respect to a single jump interface, i.e., 
\begin{align}\label{intro_optimal_profile_constant}
    K^* = \Gamma(L^1) \text{-} \liminf_{\eps \to 0} E_{\eps}(u_0, (1/2, 1/2)^2)
\end{align}
where the interface normal is taken to be $e_2$ and 
\[
    u_0(x) = \begin{cases}
        A x & \text{if } x_2 \geq 0, \\
        B x & \text{if } x_2 < 0,
    \end{cases}
\]
for two wells $A = -B = a \otimes e_2$ and $a \in \R^2$ as a simplification. This constant appears as the surface energy density in the $\Gamma$-limit and governs the cost per unit of an interface separating two wells of an optimal configuration in the limiting space $BV(\Omega; \{ A, B \})$. In practice, this formulation is too abstract to employ in actual problems. One would thus like to express it in terms of a simpler structure. This has proven to be considerably difficult and has only been achieved in the setting of two single point wells where certain symmetry assumptions are placed on the potential $W$, see \cite{ContiFonsecaLeoni2002}. We note that in the case of fluid-fluid phase transitions, the optimal profile constant \eqref{intro_optimal_profile_constant} reduces to a one-dimensional geodesic problem (cf.\ \cite{FonsecaTartar1989}).

The main contribution of this paper is the extension of the result by \cite{ContiFonsecaLeoni2002} in two dimensions under the assumption of a certain bound on the optimal profile constant in terms of the geodesic distance of the two wells. More precisely, we introduce an a priori bound in terms of a geodesic distance between the wells $A$ and $B$ (cf.\ Definition \ref{def:geodesic_distance})
\begin{align}\label{assumption_bound_on_opt_prof_const}
    K^*<3\,d_W(A,B).
\end{align}
We prove that, if \eqref{assumption_bound_on_opt_prof_const} holds in conjunction with standard regularity and (quadratic) growth assumptions for the potential $W$, then $K^*$ can be expressed as a minimisation problem over periodic gradients on the unit cube. More precisely, we show that
\begin{align}\label{intro_equ}
    K^* = \inf\Bigg\{ &\int_{(-1/2, 1/2)^2} L W(\nabla u) + \frac{1}{L}|\nabla^2 u|^2 \, dx : \, L > 0, \,  u \in H^2((-1/2, 1/2)^2; \R^2), \,  \nonumber \\ 
    & \nabla u \text{ is 1-periodic in } x_1,\, \nabla u(x) = \nabla u_0(x) \text{ for $x_1 \in (-1/2, 1/2),\, |x_2| \in (1/4, 1/2)$}
        \Bigg\}
\end{align}
holds. This characterisation of the optimal profile constant $K^*$ has already been suggested in \cite{ContiFonsecaLeoni2002} under the symmetry condition 
\[
    W(m_1, m_2) = W(-m_1, m_2).
\]
Under this assumption, the equality \eqref{intro_equ} can be shown, essentially, by reflecting and glueing optimal profiles. Although this approach is certainly viable in the presence of symmetry, it fails in its absence. We circumvent this difficulty by introducing a novel approach of glueing together optimal profiles in two dimensions based on a careful analysis of their traces. We show that the bound on the optimal profile constant translates to a bound on the energy of suitable traces of carefully selected one-dimensional line segments. This bound, in turn, implies that said traces are separated in exactly two connected regions such that the gradient along these lines is close to the wells in these regions. This makes an optimal glueing easier.

The paper is organised as follows. In Section 2, we give an overview of the preliminaries needed. More precisely, in Subsection \ref{sec:notation} we introduce the notation used throughout this paper. In Subsection \ref{sec:discussion_on_geodesic_distances}, we recall certain important properties of the geodesic distance and curves fulfilling the geodesic length bound of $3d_W(A,B)$. The main contribution of the paper is presented in Section 3. This section begins with a discussion on modifications of optimal profiles in Subsection \ref{sec:horizontal_modification}, where we gather certain statements from \cite{ContiFonsecaLeoni2002} and improve them in a suitable way. Subsection \ref{sec:optimal_interpolation_of_traces} is devoted to a glueing procedure of curves which are subject to certain energy bounds. This is, in a sense, the heart of this paper: We introduce a method here which will allow us to create transitional maps to pass from one trace to another. In Subsection \ref{sec:inequality_proof}, we prove our main result Theorem \ref{theorem:main_result}. Lastly, in Subsection \ref{sec:geodesic_distance_bound} we will introduce a class of potentials $W$ which fulfil the assumption \eqref{assumption_bound_on_opt_prof_const}. This class is essentially comprised of all perturbations (in a suitable sense) of quadratic double well potentials.






\setlength\parindent{0pt}

\section{Setting and Preliminaries}
\subsection{Notation \& Setup}\label{sec:notation}
In the following, we denote the unit cubes in 1D and 2D with $q = (-1/2, 1/2)$ resp. $Q = (-1/2, 1/2)^2$. We will use $C$ for a generic changing positive constant when writing estimates. We denote important dependencies on parameters by subscripts ($C_\sigma$, $C_\tau$, etc.). For a matrix $M \in \R^{2 \times 2}$, we sometimes write $(m_1, m_2)$ where $m_i$ is the $i$-th column. Furthermore, for $r > 0$ and a set $M \subset \R^2$ we write $B_r(M) = \{ x \in \R^2 : \dist(M, x) < r \}$. We use the usual notation $B_r(x)$ for open balls at a point $x \in \R^2$ with radius $r > 0$. For $h > 0$ and $\omega \subset \R$ we define the cylinders $\omega_h := \omega \times (-h, h)$. Furthermore, we set $\omega_* := \omega_{1/2} = \omega \times (-1/2, 1/2)$. We consider the Modica--Mortola type functional 
\[
    E_\eps(u,\Omega) := \int_\Omega \frac{1}{\eps}W(\nabla u) + \eps |\nabla^2 u|^2 \, dx
\]
for $u \in H^2(\Omega, \R^2)$, $W \colon \R^{2 \times 2} \to \R_{\geq 0}$, $\eps > 0$ and a Lipschitz domain $\Omega \subset \R^2$. We select two matrices $A,B \in \R^{2 \times 2}$ which we will call wells. Similarly to the model in \cite{ContiFonsecaLeoni2002}, we restrict ourselves to $$A = -B = a \otimes e_2 = (0, a)$$ with $a \in \R^2 \setminus \{0\}$ and $e_2$ being the second unit vector. We further denote for $(m_1 , m_2) = M \in \R^{2 \times 2}$
\[
    W_0(M) := |m_1|^2 + \min |m_2 \pm a|^2  = \min\{ |M - A|^2, |M - B|^2 \}.
\]
For brevity, we use notations of the type $\min|M \pm A|^2 := \min\{ |M - A|^2, |M - B|^2 \} $. Generally, we use $\pm$ as a placeholder for the two cases with $+$ and $-$.
We assume that $W$ is a double well potential, i.e., for all $M \in \R^{2 \times 2}$ we have $W(M) = 0$ if and only if $M \in \{ A, B \}$. Moreover, we impose the following conditions on the potential $W$:
\begin{enumerate}[label = (H\arabic *) ]
    \item\label{assump:well_and_potential_assumption} $W$ is continuous.
    \item\label{assump:quadratic_growth_around_wells} We assume that $W$ has global quadratic growth around the wells, i.e., there exist $C > 0$ such that for $(m_1 , m_2) = M \in \R^{2 \times 2}$
    \[
        \frac{1}{C}W_0(M) \leq W(M) \leq CW_0(M).
    \]
\end{enumerate}
Throughout the paper, for all statements we will assume \ref{assump:well_and_potential_assumption} and \ref{assump:quadratic_growth_around_wells}.

\begin{remark}\label{remark:gathering_of_facts_in_the_beginning}
    It has been shown in \cite[Remark 6.1]{ContiFonsecaLeoni2002} that the combination of \ref{assump:well_and_potential_assumption} and \ref{assump:quadratic_growth_around_wells} implies the existence of a constant $C > 0$ such that for all $M, N \in \R^{2\times 2}$
    \begin{align}\label{assump:quadratic_lipschitz}
        W(M) \leq C(W(N) + |M - N|^2).
    \end{align}
    Moreover, \ref{assump:quadratic_growth_around_wells} implies
    \begin{align}\label{assump:quadratic_from_below_first_variable}
        |m_1|^2 \leq CW(M)
    \end{align}
    for all $(m_1, m_2) = M \in \R^{2\times 2}$. Furthermore, we note that, for all $\alpha > 0$, condition \ref{assump:quadratic_growth_around_wells} implies the existence of a constant $C_\alpha > 0$ such that for all $M \in B_\alpha(\{\pm A \})^c$ we have
    \begin{align}\label{assump:inverse_quadratic_condition}
        \max|M \pm A|^2 \leq C_\alpha \min|M \pm A|^2 \leq C_\alpha W(M).
    \end{align}
    Indeed, we observe that 
    \[
        \lim_{|M| \to \infty}\frac{|M + A|}{|M - A|} = 1
    \]
    holds and we have for any $R > 0$ and for $M \in B_{R}(0) \setminus B_\alpha(A)^c$ 
    \[
        \frac{|M + A|}{|M - A|} \leq \frac{R + |A|}{\alpha}.
    \]
    Now, we choose $R > 0$ such that for all $M \in B_R(0)^c$ we have 
    \[
        \frac{|M + A|}{|M - A|} < 2.
    \]
    We consequently have for a large $C > 0$ (independent of $\alpha$) 
    \[
        |M + A| \leq \max\left\{2,\frac{C}{\alpha} \right\} |M - A|
    \]
    for all $M \in M \in B_\alpha(\{\pm A \})^c$.
    By symmetry, \eqref{assump:inverse_quadratic_condition} holds with $C_\alpha = \max\left\{2,C/\alpha \right\}$. We note here that for small $\alpha > 0$ we have 
    \[
        C_\alpha = \frac{C}{\alpha}.
    \]
\end{remark}

We now recall the definition of an optimal profile energy:

\begin{definition}\label{def:optimal_profile}
    For $h > 0$ and an open $\omega \subset \R$ we define the optimal profile energy by 
    \begin{align}\label{definition:optimal_profile_energy}
        \F(\omega, h) := \inf\left\{ \liminf_{n \to \infty} E_{\eps_n}(u_n, \omega_h): \eps_n \to 0^+, u_n \to u_0 \text{ in } L^1(\omega_h, \R^2) \right\} 
    \end{align}
    where 
    \[
        u_0(x) := Ax\chi_{\{x_2 > 0\}}(x) + Bx \chi_{\{x_2 \leq 0\}}(x).
    \]
\end{definition}

\begin{remark}\label{remark:vertical_boundary_conditions}
    In \cite[Lemma 4.3]{ContiFonsecaLeoni2002} it has been shown that $\F$ is independent of $h$ and that if \ref{assump:well_and_potential_assumption} holds, we have
    \[
        \F(\omega) := \F(\omega, h) = \H^1(\omega)K^*,
    \]
    where the constant $K^*$ is defined as $K^* := \F(q)$. Therefore, $\F(\cdot)$ is a multiple of the Hausdorff measure.
    Moreover, if \ref{assump:quadratic_growth_around_wells} holds, we have 
    \begin{align*}
        K^* = \inf\bigg\{ \liminf_{n \to \infty} E_{\eps_n}(u_n, q_h): & ~ \eps_n, c^+_n, c^-_n \to 0^+, u_n \to u_0 \text{ in } L^1(\omega_h, \R^2)  \\ & ~  u_n(x) = u_0(x) + c^\pm_n \quad \text{ for } x \in (\omega \times ((-h, -2/3h) \cup (2/3h, h))) \bigg\}.
    \end{align*} 
    This was shown in \cite[Proposition 6.2]{ContiFonsecaLeoni2002} for $h = 1$. The proof of this proposition can be directly adapted for any $h > 0$. The information about vertical boundary conditions is crucial for glueing.
\end{remark}

Next, we will introduce the concept of an optimal profile sequence: 
\begin{definition} \label{def:opt_prof_constant_and_opt_prof} Let $h > 0$. We say that a pair of sequences $(u_n, \eps_n) \subset  H^2(\omega_h, \R^2) \times (0,1)$ (which are admissible in taking the infimum in \eqref{definition:optimal_profile_energy}) is an optimal profile sequence with respect to $\omega$ if it attains the minimum, i.e., if 
\[
    \lim_{n \to \infty} E_{\eps_n}(u_n, \omega_h) = \H^{n-1}(\omega)K^*.
\]
\end{definition}
Via a diagonalization argument, it can be shown that the infimum in the definition of the optimal profile energy is in fact a minimum. Therefore, the existence of an optimal profile sequence is always guaranteed. One crucial property of optimal profile sequences is the local optimality: 
\begin{lemma}\label{lemma:local_optimality_of_optimal_profiles}
    Let $h > 0$. Suppose $(u_n, \eps_n) \subset  H^2(\omega_h, \R^2) \times (0,1)$ is an optimal profile sequence with respect to $\omega \subset \R$. Then, it is also an optimal profile sequence with respect to any open set $\tilde \omega \subset \omega$  with $|\partial \tilde \omega|  = 0$. 
\end{lemma}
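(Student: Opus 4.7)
The idea is to exploit the fact that $\mathcal{F}$ behaves like a (scaled) Hausdorff measure (Remark \ref{remark:vertical_boundary_conditions}), and therefore is additive under decompositions across a null boundary. We then split the total energy along $\tilde\omega$ and its complement inside $\omega$, and use $\liminf$-subadditivity together with the lower bound $\mathcal{F}$ to squeeze both pieces.

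First, since $|\partial\tilde\omega|=0$, the set $\omega':=\omega\setminus\overline{\tilde\omega}$ is open in $\R$ and
\[
    \mathcal{H}^1(\tilde\omega)+\mathcal{H}^1(\omega')=\mathcal{H}^1(\omega).
\]
Correspondingly, the cylinders $\tilde\omega_h$ and $\omega'_h$ are disjoint open subsets of $\omega_h$ whose union differs from $\omega_h$ only by the two-dimensional null set $(\partial\tilde\omega\cap\omega)\times(-h,h)$. Hence the energy decomposes additively:
\[
    E_{\eps_n}(u_n,\omega_h)=E_{\eps_n}(u_n,\tilde\omega_h)+E_{\eps_n}(u_n,\omega'_h).
\]

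Next, since $u_n\to u_0$ in $L^1(\omega_h,\R^2)$, the restrictions of $(u_n,\eps_n)$ are admissible in the definition of $\mathcal{F}(\tilde\omega,h)$ and $\mathcal{F}(\omega',h)$. Invoking Remark \ref{remark:vertical_boundary_conditions}, we obtain the two lower bounds
\[
    \liminf_{n\to\infty}E_{\eps_n}(u_n,\tilde\omega_h)\ge \mathcal{H}^1(\tilde\omega)K^{*},\qquad \liminf_{n\to\infty}E_{\eps_n}(u_n,\omega'_h)\ge \mathcal{H}^1(\omega')K^{*}.
\]
Summing the two and using $\liminf a_n+\liminf b_n\le \liminf(a_n+b_n)$ together with the additivity above and the hypothesis $\lim_n E_{\eps_n}(u_n,\omega_h)=\mathcal{H}^1(\omega)K^{*}$, we arrive at
\[
    \mathcal{H}^1(\tilde\omega)K^{*}+\mathcal{H}^1(\omega')K^{*}\le \liminf_{n\to\infty}E_{\eps_n}(u_n,\omega_h)=\mathcal{H}^1(\omega)K^{*}.
\]
Since both ends coincide, neither lower bound can have slack.

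Finally, to upgrade the $\liminf$ for $\tilde\omega_h$ to a limit and get the optimality, observe that the same squeeze also pairs $\liminf a_n$ with $\limsup b_n$: from $\limsup(a_n+b_n)=\lim(a_n+b_n)=\mathcal{H}^1(\omega)K^{*}$ and $\liminf a_n\ge \mathcal{H}^1(\tilde\omega)K^{*}$, one deduces $\limsup b_n\le \mathcal{H}^1(\omega')K^{*}$, forcing $\lim_n E_{\eps_n}(u_n,\omega'_h)=\mathcal{H}^1(\omega')K^{*}$ and, symmetrically, $\lim_n E_{\eps_n}(u_n,\tilde\omega_h)=\mathcal{H}^1(\tilde\omega)K^{*}$. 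This is precisely the optimal profile property with respect to $\tilde\omega$. The argument is essentially measure-theoretic; no serious obstacle arises, the only point to verify carefully is that $\omega'$ is a genuinely open subset of $\R$ (used to apply the lower bound $\mathcal{F}(\omega',h)=\mathcal{H}^1(\omega')K^{*}$) and that the boundary contribution of $\partial\tilde\omega$ is Lebesgue-negligible for the 2D integrals, both of which follow from $|\partial\tilde\omega|=0$.
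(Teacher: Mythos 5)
Your proof is correct and rests on the same two ingredients the paper uses: the additivity of the energy across the null boundary $\partial\tilde\omega$ and the scaling identity $\F(\cdot)=\mathcal H^1(\cdot)K^*$ from Remark \ref{remark:vertical_boundary_conditions}. The paper's version is just more compact: it applies the identity $\limsup a_n = \lim(a_n+b_n)-\liminf b_n$ (valid since $\lim(a_n+b_n)$ exists) in one step to get $\limsup E_{\eps_n}(u_n,\tilde\omega_h)\le \F(\tilde\omega)$ directly, whereas you first pin down both liminfs by the squeeze $\liminf a_n+\liminf b_n\le\liminf(a_n+b_n)$ and then separately upgrade to a limit via the limsup pairing — same argument, taken in two passes instead of one.
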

\begin{proof}
    We just note that by Definition \ref{def:optimal_profile}
    \begin{align*}
        \limsup_{n \to \infty} E_{\eps_n}(u_n, \tilde \omega_h) & = \lim_{n \to \infty} E_{\eps_n}(u_n, \omega_h) - \liminf_{n \to \infty} E_{\eps_n}(u_n, (\omega \setminus \tilde \omega)_h)  \\
        & \leq \F(\omega) - \F(\omega \setminus \overline{\tilde \omega}) = \F(\tilde \omega).
    \end{align*}
    Since, by the definition of $\F$, we have
    \[
        \F(\tilde \omega) \leq  \liminf_{n \to \infty} E_{\eps_n}(u_n, \tilde \omega_h),
    \]
    we conclude with 
    \[
        \F(\tilde \omega) = \lim_{n \to \infty} E_{\eps_n}(u_n, \tilde \omega_h).
    \]
\end{proof}

\subsection{Discussion on geodesic distances}\label{sec:discussion_on_geodesic_distances}

In the theory of liquid-liquid phase transitions, i.e., the classical vectorial Modica--Mortola functional, the geodesic distance with respect to $W$ plays a crucial role in the explicit computation of the $\Gamma$-limit (cf.\ \cite{FonsecaTartar1989}). For the theory of solid-solid phase transitions, the geodesic distance function is generally not used since the $\Gamma$-limit is computed via a cell formula over periodic functions. However, in our analysis we will still use a certain property of curves which have a bound on their geodesic length. Here, we recap the definition and some of the basic properties of geodesic distances. 
\begin{definition}\label{def:geodesic_distance_function}
    Let $I$ be any closed interval and $\varphi \in W^{1,1}(I; \R^{2 \times 2})$. We call 
    \[
        L_W(\varphi) := 2\int_I \sqrt{W(\varphi(s))}|\varphi'(s)| \, ds 
    \]
    the length of $\varphi$ with respect to $W$. We further denote the geodesic distance with respect to $W$ between the two matrices $M, N \in \R^{2 \times 2}$ by 
    \begin{align}\label{def:geodesic_distance}
        d_W(M,N) := \inf\{ L_W(\varphi) : \, \varphi \in W^{1,1}(I; \R^{2 \times 2}), \, \varphi(-1) = M, \varphi(1) = N\}.
    \end{align}
    To simplify the analysis that follows, we will use $I = [-1,1]$ and note that the above quantities are invariant under reparametrization of $I$.
\end{definition}
\begin{lemma}\label{lemma:geodesic_distance_lipschitz}
    The geodesic distance function $d_W \colon \R^{2 \times 2} \times \R^{2 \times 2} \to \R_{\geq 0}$ defined in \eqref{def:geodesic_distance} is locally Lipschitz continuous. 
\end{lemma}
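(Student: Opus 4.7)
The strategy is to combine a triangle inequality for $d_W$ with a direct linear-interpolation bound. Concretely, fix $R>0$; I aim to find $C_R>0$ such that, for all $M,N,M',N'\in\R^{2\times 2}$ of norm at most $R$,
\[
    |d_W(M,N)-d_W(M',N')|\leq C_R(|M-M'|+|N-N'|).
\]

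First I would verify a triangle inequality $d_W(M,N)\leq d_W(M,M')+d_W(M',N)$ by the usual concatenation argument. Given $W^{1,1}$ curves $\varphi_1,\varphi_2\colon[-1,1]\to\R^{2\times 2}$ joining $M$ to $M'$ and $M'$ to $N$, I reparametrize them onto $[-1,0]$ and $[0,1]$ and glue them into a single $W^{1,1}([-1,1];\R^{2\times 2})$ curve. Continuity at $s=0$ is automatic from the matching endpoint values together with the one-dimensional embedding $W^{1,1}\hookrightarrow C^0$, and the distributional derivative is just the concatenation of the two derivatives. Since $L_W$ is invariant under such reparametrization, the glued curve has length $L_W(\varphi_1)+L_W(\varphi_2)$; passing to infima gives the triangle inequality. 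Applying it twice, through the intermediate points $M'$ and $N'$, yields
\[
    |d_W(M,N)-d_W(M',N')|\leq d_W(M,M')+d_W(N,N').
\]

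It then suffices to bound $d_W(M,M')\leq C_R|M-M'|$ whenever $|M|,|M'|\leq R$. For this I test the infimum defining $d_W(M,M')$ against the linear interpolation $\varphi(s):=\tfrac{1-s}{2}M+\tfrac{1+s}{2}M'$, $s\in[-1,1]$. This $\varphi$ is smooth with $|\varphi'(s)|\equiv|M-M'|/2$, and by convexity its image lies in the compact set $\{N\in\R^{2\times 2}:|N|\leq R\}$. By continuity of $W$, the supremum $C_R:=\sup_{|N|\leq R}\sqrt{W(N)}$ is finite, so
\[
    d_W(M,M')\leq L_W(\varphi)=2\int_{-1}^{1}\sqrt{W(\varphi(s))}\,\tfrac{|M-M'|}{2}\,ds\leq 2C_R|M-M'|.
\]
Combined with the previous display this yields the claimed local Lipschitz estimate.

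The argument presents no substantive difficulty; it mirrors the classical proof that intrinsic Riemannian-type distances are locally Lipschitz. The only points worth checking carefully are the concatenation of $W^{1,1}$ paths and the finiteness of $L_W$ on smooth curves with compact image, both of which are immediate consequences of the one-dimensional $W^{1,1}\hookrightarrow C^0$ embedding and the continuity of $W$.
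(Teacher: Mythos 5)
Your argument is correct and follows essentially the same route as the paper: both proofs bound the variation of $d_W$ by prepending/appending a straight-line segment to a nearly optimal curve and using the sup of $\sqrt{W}$ on a ball to control the extra length. The only organizational difference is that you isolate the triangle inequality as an explicit intermediate step and then apply it twice, which handles both arguments of $d_W$ simultaneously, whereas the paper absorbs that concatenation into one inequality and then invokes symmetry to treat the second argument.
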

\begin{proof}
    Let $R > 0$ and $A, \tilde A, B \in B_R(0)$. Let  $\varphi \in W^{1,1}([-1,1]; \R^{2 \times 2}), \, \varphi(-1) = A, \varphi(1) = B$ such that for $\eps > 0$ we have
    \[
        L_W(\varphi) \leq d_W(A,B) + \eps.
    \]
    Let $\zeta \colon [-1, 1] \to \R^{2 \times 2}$ and $\tilde \zeta \colon [-1,0] \to \R^{2 \times 2}$ be the linear interpolation of $\tilde A$ and $A$ on the respective intervals, $\tilde \varphi \colon [0,1] \to \R^{2 \times 2}$ be the reparametrization of $\varphi$ defined by $\tilde \varphi(s) := \varphi(-1 + 2s)$ and $\psi \colon [-1,1] \to \R^{2 \times 2}$ defined by
    \[
        \psi(s) := \begin{cases}
            \tilde \zeta, & \text{in } [-1, 0], \\
            \tilde \varphi, & \text{in } [0, 1].
        \end{cases}
    \]
    Now, since 
    \[
        \L_W(\psi) = L_W(\zeta) + L_W(\varphi)
    \]
    we can derive from the definition of the geodesic distance
    \begin{align*}
        d_W(\tilde A, B) - d_W(A, B) &\leq L_W(\psi) - L_W(\varphi) + \eps \\
        &= L_W(\zeta) + \eps \\
        &\leq C\sup_{M \in B_R(0)} \sqrt{W(M)} |A - \tilde A| + \eps.
    \end{align*}
    Since $\eps > 0$ was arbitrary and by symmetry, we infer for any $\tilde A, A, B \in B_R(0) \times B_R(0)$
    \[
        |d_W(\tilde A, B) - d_W(A, B)| \leq C\sup_{M \in B_R(0)} \sqrt{W(M)} |A - \tilde A|
    \]
    By \ref{assump:well_and_potential_assumption} we know that $\sup_{M \in B_R(0)} \sqrt{W(M)}$ is finite. We have shown that $d_W$ is Lipschitz in the first variable in $B_R(0) \times B_R(0)$. By the symmetry of the geodesic distance function, $d_W$ is also Lipschitz in the second variable in $B_R(0) \times B_R(0)$. From this we can conclude that $d_W$ is locally Lipschitz.
\end{proof}
To motivate condition \eqref{assumption_bound_on_opt_prof_const} we make the following observation:
\begin{lemma}\label{lemma:existence_of_phase_regions}
    Let $\varphi \in W^{1,1}([-1,1]; \R^{2 \times 2})$ with $\varphi(-1) = M \in \R^{2\times 2}, \varphi(1) = N \in \R^{2\times 2}$ that fulfils $L_W(\varphi) < 3d_W(M,N)$. Then, for each $\alpha > 0$ with 
    \begin{align}\label{eq:admissibility_constant}
        \alpha < \gamma(M, N, W, \varphi) := \min \left\{ \frac{|M- N|}{2}, \frac{3d_W(M,N) - L_W(\varphi)}{8L}\right\}
    \end{align}
    where $L > 0$ is the Lipschitz constant of $d_W$ on the region $B_R(0) \times B_R(0)$ with $R = 2\max\{|M|, |N|\}$ we have
    \[
        \sup \varphi^{-1}(B_\alpha(M)) < \inf \varphi^{-1}(B_\alpha(N)). 
    \]
    
\end{lemma}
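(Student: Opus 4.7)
The plan is to argue by contradiction. Assume the claim fails, so $\sup \varphi^{-1}(B_\alpha(M)) \geq \inf \varphi^{-1}(B_\alpha(N))$. Because $\varphi(-1) = M$ and $\varphi(1) = N$, both preimages are nonempty. Moreover, the admissibility condition $\alpha < |M-N|/2$ makes the closures $\overline{B_\alpha(M)}$ and $\overline{B_\alpha(N)}$ disjoint, so a short continuity argument excludes the equality $\sup = \inf$: the common value would force $\varphi$ evaluated there to lie in both closures. Consequently, there exist $s_2 < s_1$ in $[-1, 1]$ with $\varphi(s_2) \in B_\alpha(N)$ and $\varphi(s_1) \in B_\alpha(M)$.

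The heart of the argument is a three-piece split. I partition $[-1,1]$ into the consecutive subintervals $[-1, s_2]$, $[s_2, s_1]$ and $[s_1, 1]$, and use that each restriction of $\varphi$ is (after reparametrization) an admissible competitor in the infimum defining $d_W$:
\begin{align*}
    L_W(\varphi|_{[-1, s_2]}) & \geq d_W(M, \varphi(s_2)), \\
    L_W(\varphi|_{[s_2, s_1]}) & \geq d_W(\varphi(s_2), \varphi(s_1)), \\
    L_W(\varphi|_{[s_1, 1]}) & \geq d_W(\varphi(s_1), N).
\end{align*}
I then apply Lemma \ref{lemma:geodesic_distance_lipschitz} to each right-hand side in order to replace it by $d_W(M,N)$ at the cost of an error of at most $2L\alpha$. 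For this to be legitimate one has to check that $M$, $N$, $\varphi(s_1)$, $\varphi(s_2)$ all lie in $B_R(0)$, which follows from $|M|, |N| \leq R/2$ together with $\alpha < |M - N|/2 \leq R/2$, so that $|\varphi(s_i)| \leq R/2 + \alpha < R$.

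Adding the three resulting inequalities yields $L_W(\varphi) \geq 3 d_W(M,N) - 4 L \alpha$. On the other hand, the admissibility bound $\alpha < (3 d_W(M,N) - L_W(\varphi))/(8L)$ rearranges to $L_W(\varphi) < 3 d_W(M,N) - 8 L \alpha$, which contradicts the previous estimate. The factor $8$ rather than $4$ in the definition of $\gamma$ is precisely what is needed to obtain a strict contradiction. There is no real obstacle in this plan: the only delicate points are the topological exclusion of $\sup = \inf$ and the verification that all four matrices sit in the domain of the local Lipschitz constant of $d_W$, and both are forced by the bound $\alpha < |M-N|/2$.
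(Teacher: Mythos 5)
Your proof is correct and follows essentially the same strategy as the paper's: argue by contradiction, locate $s_2 < s_1$ with $\varphi(s_2)$ near $N$ and $\varphi(s_1)$ near $M$, split $[-1,1]$ into three subintervals each of whose $W$-length dominates the geodesic distance between its endpoints, then use the local Lipschitz continuity of $d_W$ to replace each such distance by $d_W(M,N)$ at total cost $4L\alpha$. One tiny imprecision: you state each replacement costs ``at most $2L\alpha$'' (which would sum to $6L\alpha$), but your final tally of $4L\alpha$ is the correct one — the two outer terms only incur error $L\alpha$ each.
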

\begin{proof}
    We first assume $\alpha < |M - N|/2$. Note that this implies that $\sup \varphi^{-1}(B_\alpha(M)) = \inf \varphi^{-1}(B_\alpha(N))$ is not possible since $\varphi$ is continuous. We prove the statement by contradiction. Assume that the reverse inequality
    \[
        \sup \varphi^{-1}(B_\alpha(M)) >\inf \varphi^{-1}(B_\alpha(N))
    \]
    holds.
    If this is the case, we could find  $s_M^\alpha \in \varphi^{-1}(B_\alpha(M))$ and $s_N^\alpha \in \varphi^{-1}(B_\alpha(N))$ with $s_N^\alpha < s_M^\alpha$. By Lemma \ref{lemma:geodesic_distance_lipschitz} $d_W$ is Lipschitz continuous on $(B_R(M) \cup B_R(N)) \times (B_R(M) \cup B_R(N))$ with Lipschitz constant $L > 0$. By our choice of $\alpha$ we have $\varphi(s_M^\alpha), \varphi(s_N^\alpha) \in B_R(0)$. We derive
    \[
        |d_W(M,N) - d_W(\varphi(s_M^\alpha)), \varphi(s_N^\alpha))| \leq L(|M - \varphi(s_M^\alpha)| + |N - \varphi(s_N^\alpha)|) \leq 2L\alpha.
    \]
    Analogously, we have
    \[
        |d_W(M,N) - d_W(M, \varphi(s_N^\alpha))| \leq L\alpha
    \]
    and 
    \[
        |d_W(M,N) - d_W(\varphi(s_M^\alpha), N)| \leq L\alpha.
    \]
    From these estimates, we can derive 
    \begin{align*}
        3d_W(M,N) \leq d_W(M, \varphi(s_N^\alpha)) + d_W(\varphi(s_N^\alpha), \varphi(s_M^\alpha)) + d_W(\varphi(s_M^\alpha), N) + 4L\alpha
        \leq L_W(\varphi) + 4L\alpha.
    \end{align*} 
    So for 
    \begin{align}
        \alpha < \frac{3d_W(M,N) - L_W(\varphi)}{8L}
    \end{align}
    we derive a contradiction.
\end{proof}

Now, we introduce the concept of an admissible curve. These are essentially those curves where we transition from one phase to another only once which means we can set a point which separates the phases, i.e., above this point we are close to one phase and below we are close to the other one.

\begin{definition}\label{def:admissable_curves}
    Let $\varphi \in W^{1,1}([-1,1]; \R^{2\times 2})$ with $\varphi(-1) = M, \varphi(1) = N$.  Then, we call the pair $(\varphi, \alpha)$ admissible if $\varphi$ fulfils the assumptions of Lemma \ref{lemma:existence_of_phase_regions} and $\alpha < \gamma(M,N,W,\varphi)$ where $\gamma$ is explicitly defined in \eqref{eq:admissibility_constant}. Moreover, we define the phase separating point for an admissible pair $(\varphi, \alpha)$ by 
    \[
        s_\varphi^\alpha := \frac{1}{2}(\sup \varphi^{-1}(B_\alpha(M)) + \inf \varphi^{-1}(B_\alpha(N))).
    \]
\end{definition}

Next, we show that admissible curves enjoy certain properties.

\begin{lemma}\label{lemma:estimates_for_the_difference_and_discussion_for_alpha}
    Let $\varphi, \psi \in W^{1,1}([-1,1], \R^{2 \times 2})$ with equal endpoints $\varphi(-1) = \psi(-1) = A$ and $\varphi(1) = \psi(1) = B$.  Then, the following holds:
    \begin{enumerate}[label=(\roman*)]
        \item Suppose that $(\varphi, \alpha)$ and $(\psi, \alpha)$ are admissible pairs for some $\alpha > 0$ and the phase separating points coincide $s_\varphi^\alpha = s_\psi^\alpha$. Then, we have for every $s \in [-1,1]$ the estimate
        \[
            |\varphi(s) - \psi(s)|^2 \leq C_\alpha (W(\varphi(s)) + W(\psi(s))).
        \]
        \item Let $K > 0$. Suppose that $\varphi$ fulfils
        \[
            L_W(\varphi) < K < 3d_W(A,B).
        \]
        Then,  $(\varphi, \alpha_K)$ is admissible for 
        \begin{align}\label{eq:definition_alpha_K}
            \alpha_K := \min\{(3d_W(A,B) - K)/(12L), |A-B|/8\}.
        \end{align}
        Here, $L > 0$ is the Lipschitz constant of $d_W$ on the region $B_R(0)\times B_R(0)$ with $R = 2\max\{|A|, |B|\}$.
    \end{enumerate}
\end{lemma}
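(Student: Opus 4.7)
The plan is to handle part (i) by a case analysis on the positions of $\varphi(s)$ and $\psi(s)$ relative to the $\alpha$-neighbourhoods of the wells, while part (ii) will reduce to a direct algebraic verification against the admissibility definition \eqref{eq:admissibility_constant}.

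For (i), fix $s \in [-1,1]$ and distinguish cases according to whether $\varphi(s)$ and $\psi(s)$ each lie in $B_\alpha(A)$, in $B_\alpha(B)$, or in the complement $B_\alpha(\{\pm A\})^c$. The key combinatorial observation is that the two ``cross'' configurations are excluded by the hypothesis $s_\varphi^\alpha = s_\psi^\alpha =: s_0$: if $\varphi(s) \in B_\alpha(A)$ then $s \leq \sup \varphi^{-1}(B_\alpha(A)) < s_\varphi^\alpha = s_0$ by definition of the phase separating point, whereas $\psi(s) \in B_\alpha(B)$ would force $s \geq \inf \psi^{-1}(B_\alpha(B)) > s_\psi^\alpha = s_0$, a contradiction (and symmetrically). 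In each remaining case one combines the triangle inequality with two pointwise estimates: whenever a value $M \in \{\varphi(s), \psi(s)\}$ lies in $B_\alpha(A)$ (respectively $B_\alpha(B)$), admissibility forces $\alpha < |A-B|/2$, so $W_0(M) = |M - A|^2$ (respectively $|M - B|^2$) and hence the lower bound in \ref{assump:quadratic_growth_around_wells} yields $|M - A|^2 \leq CW(M)$; whenever $M \in B_\alpha(\{\pm A\})^c$, the inverse quadratic condition \eqref{assump:inverse_quadratic_condition} supplies $\max|M \pm A|^2 \leq C_\alpha W(M)$. Applying $|\varphi(s) - \psi(s)|^2 \leq 2|\varphi(s) - P|^2 + 2|\psi(s) - P|^2$ with $P \in \{A, B\}$ chosen as the well compatible with both of the surviving cases then yields the claimed bound.

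For (ii), it suffices to verify $\alpha_K < \gamma(A, B, W, \varphi)$ directly against the two constraints in \eqref{eq:admissibility_constant}. The bound $\alpha_K \leq |A-B|/8 < |A-B|/2$ is immediate. For the other constraint, using $\alpha_K \leq (3d_W(A,B) - K)/(12L)$, one is left with the purely algebraic inequality
\[
    \frac{3d_W(A,B) - K}{12L} < \frac{3d_W(A,B) - L_W(\varphi)}{8L},
\]
which after clearing denominators reduces to $3L_W(\varphi) < 3d_W(A,B) + 2K$. This follows at once from the chained hypothesis $L_W(\varphi) < K < 3d_W(A,B)$, since then $3L_W(\varphi) < 3K < 3d_W(A,B) + 2K$.

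The only real subtlety lies in the case analysis in (i): one must exploit the coincidence of the phase separating points to rule out precisely those configurations in which the desired bound would otherwise fail (namely where one curve is already close to one well while the other has transitioned to the opposite well). Part (ii) is essentially bookkeeping once the admissibility constant is unfolded.
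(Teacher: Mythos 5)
Your proof is correct and follows essentially the same route as the paper: in (i) you exclude the cross configurations via the coinciding phase separating points and then invoke \ref{assump:quadratic_growth_around_wells} and \eqref{assump:inverse_quadratic_condition} on the surviving cases, and in (ii) you verify $\alpha_K < \gamma(A,B,W,\varphi)$ by direct comparison, which is exactly what the paper does (though the paper leaves that arithmetic implicit). A tiny simplification for (ii): since $12 > 8$ and $K > L_W(\varphi)$, one has $(3d_W(A,B)-K)/(12L) < (3d_W(A,B)-L_W(\varphi))/(8L)$ immediately, without the intermediate inequality $3L_W(\varphi) < 3d_W(A,B) + 2K$.
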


\begin{proof}
    We start by proving $(i)$. We have for $s \in \varphi^{-1}(B_\alpha(A))$ and $\alpha < R$ 
    \[
        |\varphi(s) - \psi(s)|^2 \leq C(|\varphi(s) - A|^2 + |\psi(s) - A|^2) \leq (W(\varphi(s)) + |\psi(s) - A|^2).
    \]
    If $\psi(s) \in B_\alpha(B)$ we would have by the definition of the phase separating points 
    \[
        s_\psi^\alpha > \sup \psi^{-1}(B_\alpha(B)) > s > s_\varphi^\alpha
    \]
    which contradicts our assumption. So we must have $\psi(s) \in B_\alpha(A)$ or $\psi(s) \in B_\alpha(\{A,B\})^c$. In the first case, we apply \ref{assump:quadratic_growth_around_wells} and, in the second case, \eqref{assump:inverse_quadratic_condition} to derive
    \[
        |\psi(s) - A|^2 \leq C_\alpha W(\psi(s)).
    \]
    Analogous estimates can be made for $s \in \varphi^{-1}(B_\alpha(B))$ and $s \in \varphi^{-1}(B_\alpha(\{A,B\})^c)$. In conclusion, we derive for every $s \in [-1,1]$ the pointwise estimate
    \begin{align*}
        |\varphi(s) - \psi(s)|^2 \leq C_\alpha(W(\varphi(s)) + W(\psi(s))).
    \end{align*}
    
    For $(ii)$, we just observe that
    \[
        \alpha_K < \gamma(A,B, W, \varphi)
    \]
    by comparing $\alpha_K$ to the definition of $\gamma$ (cf.\ \eqref{eq:admissibility_constant}). By definition, this means that $(\varphi, \alpha_K)$ is admissible.
\end{proof}

\section{Main Result}
We are now in a position to state the main result of this paper. We recall the definition of the optimal profile constant $K^*$ from Definition \ref{def:opt_prof_constant_and_opt_prof}, and of the geodesic distance $d_W$ from Definition \ref{def:geodesic_distance_function}.
\begin{theorem}\label{theorem:main_result}
    Suppose that $\ref{assump:well_and_potential_assumption}$, $\ref{assump:quadratic_growth_around_wells}$ and $$K^* < 3d_W(A,B)$$ 
    holds. Then,
    \begin{align*}
        K^* = K^*_\rm{per} := \inf\Bigg\{ &\int_Q L W(\nabla u) + \frac{1}{L}|\nabla^2 u|^2 \, dx : \, L > 0, \,  u \in H^2(Q; \R^2), \, \nabla u \text{ is 1-periodic in } x_1, \\ 
        & \nabla u(x) = \nabla u_0(x) \text{ for $x_1 \in (-1/2, 1/2),\, |x_2| \in (1/4, 1/2)$}
        \Bigg\},
    \end{align*}
    where $Q$ is the unit cube $(-1/2, 1/2)^2$ and $u_0$ is as in Definition \ref{def:optimal_profile}.
\end{theorem}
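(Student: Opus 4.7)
We prove both inequalities. The direction $K^* \leq K^*_{\rm per}$ is established by periodic tiling: given any admissible pair $(L, u)$ for $K^*_{\rm per}$, the $x_1$-periodicity of $\nabla u$ combined with the boundary conditions $\nabla u \in \{A, B\}$ on $|x_2| > 1/4$ (noting $Ae_1 = Be_1 = 0$) allow one to extend $u$ to a genuinely $1$-periodic function $\tilde u$ on $\R \times (-1/2, 1/2)$. Setting $\eps_n := 1/(nL)$ and $v_n(x) := \tfrac{1}{n} \tilde u(nx_1, nx_2)$ on $\{|x_2| < 1/(2n)\}$, extended by $v_n := u_0 + c_n^\pm$ on the outer strips with $c_n^\pm \to 0$ chosen for continuity, a direct change of variables yields $E_{\eps_n}(v_n, Q) = E_{1/L}(u, Q)$ for every $n$, while $v_n \to u_0$ in $L^1(Q)$. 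Taking the infimum over $(L, u)$ yields $K^* \leq K^*_{\rm per}$.

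For the converse inequality $K^*_{\rm per} \leq K^*$, start with an optimal profile sequence $(u_n, \eps_n)$ for $K^*$ on $Q$, chosen so that $u_n = u_0 + c_n^\pm$ on $|x_2| > 1/3$ with $c_n^\pm \to 0$ (Remark \ref{remark:vertical_boundary_conditions}); using the tools of Subsection \ref{sec:horizontal_modification}, modify $u_n$ so that this holds on the slightly larger region $|x_2| > 1/4$ at asymptotically vanishing extra cost. For the vertical slice traces $\gamma_{x_1}^n(\cdot) := \nabla u_n(x_1, \cdot)$, Young's inequality gives
\[
L_W(\gamma_{x_1}^n) \leq \int_{-1/2}^{1/2} \tfrac{1}{\eps_n} W(\nabla u_n) + \eps_n |\partial_{x_2} \nabla u_n|^2 \, dx_2,
\]
so Fubini together with $|\partial_{x_2} \nabla u_n|^2 \leq |\nabla^2 u_n|^2$ yields $\int_{-1/2}^{1/2} L_W(\gamma_{x_1}^n) \, dx_1 \leq E_{\eps_n}(u_n, Q) \to K^*$. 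Fix $\eta > 0$ small. Applying Lemma \ref{lemma:local_optimality_of_optimal_profiles} on the thin lateral strips $(-1/2, -1/2 + 2\eta)_*$ and $(1/2 - 2\eta, 1/2)_*$ bounds $\int L_W$ on each by $2\eta K^* + o(1)$; Chebyshev together with the strict assumption $K^* < 3 d_W(A, B)$ then produces slices $x_1^\pm$ within these strips satisfying $L_W(\gamma_{x_1^\pm}^n) < 3 d_W(A, B)$. By Lemma \ref{lemma:estimates_for_the_difference_and_discussion_for_alpha}(ii), the pairs $(\gamma_{x_1^\pm}^n, \alpha_K)$ are then admissible in the sense of Definition \ref{def:admissable_curves}, decomposing into clean $A$- and $B$-phase regions separated by a single phase-separating point.

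Define $u_n^{\rm per} \in H^2(Q; \R^2)$ by $u_n^{\rm per} := u_n$ on $(x_1^-, x_1^+) \times (-1/2, 1/2)$ and, on the outer region $[(-1/2, x_1^-) \cup (x_1^+, 1/2)] \times (-1/2, 1/2)$ (whose two components are identified via the intended $\nabla$-periodicity at $x_1 = \pm 1/2$), via the trace-interpolation procedure of Subsection \ref{sec:optimal_interpolation_of_traces}. This construction is to be designed so that (a) the traces at $x_1 = x_1^\pm$ match those of $u_n$ (ensuring $H^2$-regularity of $u_n^{\rm per}$), (b) $\nabla u_n^{\rm per}(-1/2, \cdot) = \nabla u_n^{\rm per}(1/2, \cdot)$ (enforcing $\nabla$-periodicity), (c) $\nabla u_n^{\rm per} = \nabla u_0$ on $|x_2| > 1/4$, and (d) the outer-strip energy contribution is $o(1) + C\eta$. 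Setting $L_n := 1/\eps_n$, the pair $(L_n, u_n^{\rm per})$ is admissible for $K^*_{\rm per}$; by the local optimality of $(u_n, \eps_n)$ the central region contributes $K^*(x_1^+ - x_1^-) + o(1) \leq K^* + o(1)$, while the outer region contributes $o(1) + C\eta$. Letting $n \to \infty$ and then $\eta \to 0$ gives $K^*_{\rm per} \leq K^*$.

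The main obstacle, and the novel contribution of the paper, is precisely the trace-interpolation construction invoked in (iii). The role of the assumption $K^* < 3 d_W(A, B)$ is to guarantee via Lemma \ref{lemma:existence_of_phase_regions} that the selected traces split cleanly into exactly two phase regions, so that the interpolation can proceed phase-by-phase: within each phase $W \approx 0$, making internal interpolation cheap, and the only delicate issue is matching the $x_2$-locations and shapes of the two transition layers of $\gamma_{x_1^\pm}^n$ across an outer strip of total width $\leq 4\eta$. The symmetric case $W(m_1, m_2) = W(-m_1, m_2)$ treated in \cite{ContiFonsecaLeoni2002} circumvents this by reflecting profiles; the trace-based approach outlined above is what allows one to dispense with symmetry.
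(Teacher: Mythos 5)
Your overall strategy matches the paper's: the direction $K^* \leq K^*_{\rm per}$ via rescaling/tiling is the same idea as the cited Proposition from \cite{ContiFonsecaLeoni2002} (your exact-periodic-tiling variant is a valid alternative to the Riemann--Lebesgue argument); and for $K^*_{\rm per} \leq K^*$ you correctly identify the plan of selecting good vertical slices, exploiting admissibility from $K^* < 3d_W(A,B)$, and then interpolating traces. That said, the outline for the converse inequality has several concrete gaps relative to what the lemmas in the paper actually provide.

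First, your slice selection yields a bound on $L_W(\gamma_{x_1^\pm}^n)$, but the glueing machinery in Lemmas \ref{lemma:trace_translation_glueing}--\ref{lemma:midpoint_estimate} requires a bound on the stronger quantity $I_{\eps_n}(\gamma_{x_1^\pm}^n) < K < 3d_W(A,B)$; the same Fubini/Chebyshev argument works for $I_{\eps_n}$, so this is a fixable slip, but it is not cosmetic since the energy estimates of the transitional maps are all stated in terms of $I_\eps$.

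Second, and more fundamentally, you propose to glue $u_n$ directly to a trace-interpolant in the outer strip, requiring ``the traces at $x_1 = x_1^\pm$ match those of $u_n$.'' This is not achievable with the interpolation maps of Subsection \ref{sec:optimal_interpolation_of_traces}: those maps all satisfy $\partial_1 = 0$ near their lateral boundaries (property (iii) in each of Lemmas \ref{lemma:trace_translation_glueing}, \ref{lemma:trace_the_same_midpoint_glueing}, \ref{lemma:trace_glueing_if_midpoints_close}), whereas $\partial_1 u_n(x_1^\pm, \cdot)$ is generically nonzero, so the resulting $u_n^{\rm per}$ would fail to be $H^2$ across $x_1 = x_1^\pm$. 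The role of Lemma \ref{lemma:horizontal_boundary_condition} (the \emph{horizontal} modification of Subsection \ref{sec:horizontal_modification}) is precisely to repair this: it produces $w_n$ agreeing with $u_n$ in the interior while satisfying $w_n(x_1, x_2) = u_n(s_n^\pm, x_2)$ (hence $\partial_1 w_n = 0$) near $x_1 = \pm 1/2$, at energy cost $O(\delta)$, while also providing the trace-energy bound $I_{\eps_n}(\nabla u_n(s_n^i, \cdot)) < \tau K^*$ and the smallness of $|\nabla u_n - \nabla u_0|$ and $|u_n - u_0|$ along the selected slices, all simultaneously via the De Giorgi-type Lemma \ref{lemma:combinatorics_lemma}. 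You cite Subsection \ref{sec:horizontal_modification} but invoke it only for a \emph{vertical} modification ($|x_2| > 1/3 \to |x_2| > 1/4$, which Proposition \ref{theorem:vertical_boundary_condition} already handles directly by choosing $h$ appropriately), and you do not use its actual and essential role of flattening the trace.

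Third, you allude to ``matching the $x_2$-locations and shapes of the two transition layers'' but do not invoke Lemma \ref{lemma:midpoint_estimate}, which is the step that makes the interpolation cheap: it requires the affine boundary conditions from Proposition \ref{theorem:vertical_boundary_condition} and yields $|s_{\zeta_\varphi} - s_{\zeta_\psi}| \lesssim \sqrt{h\eps_n}$, which feeds directly into the hypothesis of Lemma \ref{lemma:trace_glueing_if_midpoints_close}. Without this quantitative control the outer-strip energy is not $O(\eta) + o(1)$, since Lemma \ref{lemma:trace_glueing_if_midpoints_close}'s bound degenerates if the phase-separating points are not $\tilde h \sqrt{\eps_n}$-close. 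So your approach is correct in spirit but skips the three technical pivots (the De Giorgi selection with $I_{\eps_n}$-bounds, the horizontal flattening of traces, and the phase-separating-point estimate) that the paper needs to make the glueing well-defined and cheap.
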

It was already shown in the proof of \cite[Proposition 6.4]{ContiFonsecaLeoni2002} that under \ref{assump:well_and_potential_assumption} we have $K^* \leq K^*_\rm{per}$. We highlight that only the continuity of $W$ is needed so that one can apply the Riemann-Lebesgue Lemma. For convenience, we recall this fact with a sketch of its proof.
\begin{proposition}[{\cite[Proposition 6.4]{ContiFonsecaLeoni2002}}]
    Suppose that $\ref{assump:well_and_potential_assumption}$ holds. Then, we have 
    \[
        K^* \leq K^*_\rm{per}.
    \]
\end{proposition}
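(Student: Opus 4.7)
The plan is, given an admissible pair $(u,L)$ in the definition of $K^*_{\mathrm{per}}$, to produce an explicit recovery sequence $(u_n,\varepsilon_n)$ with $\varepsilon_n\to 0^+$ and $u_n\to u_0$ in $L^1(Q;\R^2)$ whose energy equals the periodic integrand identically; the inequality then follows by passing to the infimum over $(u,L)$. The construction is the standard periodic tiling: choose integers $N_n\to\infty$, set $\varepsilon_n:=1/(LN_n)$, and define
\[
u_n(y) := \frac{1}{N_n}\,u(N_n y), \qquad y \in Q.
\]

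I would first extend $u$ from $Q$ to all of $\R^2$. Since $\nabla u = A = a\otimes e_2$ has vanishing first column on $\{x_2>1/4\}\cap Q$ (and analogously for $B$ below), $u$ is affine on those two strips, namely $u(x)=Ax+c_+$ for $x_2>1/4$ and $u(x)=Bx+c_-$ for $x_2<-1/4$. In particular the trace difference $u(1/2,\cdot)-u(-1/2,\cdot)$ vanishes on $\{|x_2|>1/4\}$, and its $x_2$-derivative $\partial_2 u(1/2,\cdot)-\partial_2 u(-1/2,\cdot)$ vanishes everywhere by $1$-periodicity of $\nabla u$ in $x_1$. Hence this trace difference is identically zero, so $u$ itself (not just $\nabla u$) is $1$-periodic in $x_1$, and together with the two affine formulas above and below it extends to an $H^2_{\mathrm{loc}}$-function on $\R^2$. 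This extension is globally $H^2$ across the lines $\{|x_2|=1/4\}$ because $\nabla u$ is already locally constant on a neighbourhood of these interfaces inside $Q$, so the matching is trivially smooth.

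With the construction above, $\nabla u_n(y) = \nabla u(N_n y)$ is $1/N_n$-periodic in $y_1$ and equals $\nabla u_0(y)$ for $|y_2|>1/(4N_n)$, while $\nabla^2 u_n(y) = N_n\,\nabla^2 u(N_n y)$ vanishes on the same set. A change of variables $x=N_n y$ combined with the $1$-periodicity of $\nabla u$ in $x_1$ then yields
\[
E_{\varepsilon_n}(u_n,Q) = \int_Q\!\Big(\frac{W(\nabla u)}{\varepsilon_n N_n} + \varepsilon_n N_n\,|\nabla^2 u|^2\Big)\,dx = \int_Q\!\Big(L\,W(\nabla u) + \frac{1}{L}\,|\nabla^2 u|^2\Big)\,dx,
\]
since $\varepsilon_n N_n = 1/L$ by construction. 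Picking $N_n$ so that $\varepsilon_n N_n$ hits $1/L$ exactly avoids any Riemann--Lebesgue averaging; the version for arbitrary $\varepsilon\to 0^+$ with $N_n = \lfloor 1/(\varepsilon L)\rfloor$ would reduce to this subsequence argument plus a harmless correction on a single fractional leftover tile of vanishing width.

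To conclude, on $\{|y_2|>1/(4N_n)\}$ one has $u_n(y)-u_0(y)=c_\pm/N_n\to 0$ uniformly, while on the shrinking strip $\{|y_2|<1/(4N_n)\}$ both $u_n$ and $u_0$ are uniformly bounded and the measure tends to zero, so $u_n\to u_0$ in $L^1(Q;\R^2)$. Hence $(u_n,\varepsilon_n)$ is admissible in Definition~\ref{def:optimal_profile}, giving $K^* = \mathcal{F}(q) \le \int_Q L\,W(\nabla u) + (1/L)|\nabla^2 u|^2\,dx$, and infimising over $u$ and $L$ yields $K^*\le K^*_{\mathrm{per}}$. There is no genuine obstacle in this proof: the only point that deserves attention is the trace-level argument in the first paragraph showing that the periodicity of $\nabla u$ combined with the affine structure forces $u$ itself (up to a constant) to be $1$-periodic in $x_1$.
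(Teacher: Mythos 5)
Your proof is correct and follows the same recovery-sequence construction as the paper's (rescaled periodic tiling into a shrinking strip). The one small refinement — choosing $\eps_n = 1/(LN_n)$ with $N_n \in \mathbb{N}$ so that the rescaled energy equals the cell energy \emph{exactly} — sidesteps the Riemann--Lebesgue averaging that the paper invokes for arbitrary $\eps_n\to 0$, and your trace-level observation that $u$ itself (not merely $\nabla u$) is $1$-periodic in $x_1$ is a correct consequence of the affine boundary conditions, though the paper avoids needing it by extending $\nabla u$ periodically rather than $u$.
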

\begin{proof}
    Let $\delta > 0$. First, let $u \in H^2(Q;\R^2)$ which almost minimizes $K^*_\rm{per}$, i.e., there exists an $L > 0$ such that
    \[
        K^*_\rm{per} + \delta > \int_Q L W(\nabla u) + \frac{1}{L}|\nabla^2 u|^2 \, dx.
    \]
    Then, for any sequence of positive numbers $\{\eps_n\}$ with $\eps_n \to 0$ we define a rescaled sequence of maps $z_{\eps_n} \in H^2(Q;\R^2)$ (cf.\ \cite[(6.20)]{ContiFonsecaLeoni2002}) such that 
    \[
        \nabla z_{\eps_n}(x) = \begin{cases}
            a \otimes e_2 & \text{if } x_2 > \frac{\eps L}{2} \\
            \nabla u\left( \frac{x}{\eps L} \right) & \text{if } |x_2| \leq \frac{\eps L}{2} \\
            - a \otimes e_2 & \text{if } x_2 < -\frac{\eps L}{2}.
        \end{cases}
    \]
    Here, we used the fact that we can extend $\nabla u$ periodically in $x'$. We note that 
    \[
        E_{\eps_n}(z_{\eps_n}) = \int_Q L W\left( \nabla u\left( \frac{x'}{\eps L}, t \right) \right) + \frac{1}{L}\left|\nabla^2 u\left( \frac{x'}{\eps L}, t \right)\right|^2 \, d(x',t) .
    \]
    Now, we apply the Riemann Lebesgue Lemma to infer 
    \[
        \liminf_{n \to \infty} E_{\eps_n}(z_{\eps_n}) = \int_Q L W(\nabla u) + \frac{1}{L}|\nabla^2 u|^2 \, dx
    \]
    and, consequently,
    \[
        K^* \leq \liminf_{n \to \infty} E_{\eps_n}(z_{\eps_n})  \leq K^*_\rm{per} + \delta.
    \]
    Since $\delta >0$ was arbitrary we conclude.
\end{proof}

For completeness, we also recall that the equality $K^* = K^*_\rm{per}$ paired with \ref{assump:well_and_potential_assumption} and \ref{assump:quadratic_growth_around_wells} implies 
\[
    \Gamma(L^1)-\lim_{n \to \infty} E_{\eps_n}(u, \Omega) = K^*_\rm{per}\H^{1}(J_{\nabla u} \cap \Omega)
\]
(cf.\ \cite[Theorem 6.6 \& 6.7]{ContiFonsecaLeoni2002}) for any bounded, simply connected Lipschitz domain $\Omega \subset \R^2$. \\[0,6em]
The remainder of this section is devoted to showing the inequality $K^* \geq K^*_\rm{per}$. 
We will start the discussion with Lemma \ref{lemma:horizontal_boundary_condition}, where we show that optimal profile sequences can be modified suitably at the horizontal boundary. This idea was originally used in \cite{ContiFonsecaLeoni2002}, but we refined it here, so that the trace of the gradient satisfies a suitable energy bound. Afterwards, in Lemmas \ref{lemma:trace_translation_glueing} -- \ref{lemma:trace_glueing_if_midpoints_close}, we are concerned with glueing together traces with certain energy bounds. In Theorem \ref{thm:horizontal_modification}, we combine these auxiliary steps to derive $K^*_\rm{per} \leq K^*$. Lastly, in Theorem \ref{thm:quadratic_perturbations_fulfil_requirement} we will show that the assumptions of the main result are fulfilled as long as the potential $W$ is a suitable perturbation of a quadratic potential.  \\[0,6em]

\subsection{Horizontal modification of optimal profiles}\label{sec:horizontal_modification}

We start off the discussion with a recap of the vertical modification of an optimal profile found in \cite{ContiFonsecaLeoni2002} (cf.\ also the comments in Remark \ref{remark:vertical_boundary_conditions}):
\begin{proposition}{\cite[Proposition 6.2]{ContiFonsecaLeoni2002}}\label{theorem:vertical_boundary_condition}
    Let $h > 0$ and $(u_n, \eps_n) \subset H^2(Q, \R^2) \times (0,1)$ be an optimal profile sequence with respect to $q = (-1/2, 1/2)$ in the sense of Definition \ref{def:opt_prof_constant_and_opt_prof}. Then, there exists an optimal profile sequence $(w_n, \eps_n) \subset H^2(Q, \R^2) \times (0,1)$ with respect to $q$ and null sequences $c_n^\pm \in \R^2$ such that 
    \[
        w_n(x) = 
        \begin{cases}
            x_2 a + c^+_n & \text{ for }x_2 > \frac{2 h}3, \\
            -x_2 a + c^-_n & \text{ for } x_2 < -\frac{2 h}3.
        \end{cases}
    \]
\end{proposition}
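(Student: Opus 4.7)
The approach is to modify $u_n$ in a thin horizontal strip just below $x_2 = 2h/3$, and symmetrically above $x_2 = -2h/3$, so that in the top and bottom regions $w_n$ coincides with the affine targets $x_2 a + c_n^+$ and $-x_2 a + c_n^-$. The underlying mechanism is that the energy of an optimal profile concentrates at the interface $\{x_2 = 0\}$ and is negligible away from it, while the $L^1$-convergence $u_n \to u_0$ forces $\nabla u_n$ to be close to $A$ on the top strip. A Fubini slicing then produces one-dimensional levels on which the traces of $\nabla u_n$ and $\eps_n |\nabla^2 u_n|^2$ are well behaved, and a cutoff interpolation glues smoothly across such levels.

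More concretely, since $\F$ is independent of $h$ by Remark \ref{remark:vertical_boundary_conditions}, additivity yields $E_{\eps_n}(u_n, q_h \setminus q_{h/2}) \to 0$. Combined with the quadratic growth \ref{assump:quadratic_growth_around_wells} and the $L^1$-convergence (which rules out $\nabla u_n$ being close to the wrong well $B$ on a set of positive measure in the top strip), this gives
\[
    \int_{q \times (h/2,h)} \bigl(|\nabla u_n - A|^2 + \eps_n|\nabla^2 u_n|^2\bigr)\, dx \to 0,
\]
and symmetrically below with $B$ in place of $A$. Applying Fubini together with Chebyshev then produces levels $t_n^+ \in (2h/3 - \eta_n, 2h/3)$ and $t_n^- \in (-2h/3, -2h/3 + \eta_n)$ for a suitable null sequence $\eta_n \to 0$ such that
\[
    \int_{\{x_2 = t_n^\pm\}} \bigl(|\nabla u_n - A_\pm|^2 + \eps_n|\nabla^2 u_n|^2\bigr)\, d\H^1 \to 0,
\]
where $A_+ = A$ and $A_- = B$.

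I would then set $c_n^\pm := \int_{-1/2}^{1/2} u_n(x_1, t_n^\pm)\, dx_1 \mp t_n^\pm a$, which are null sequences by the $L^1$-convergence, choose $C^2$ cutoff functions $\phi_n^\pm$ on $[t_n^+, 2h/3]$ (resp.\ $[-2h/3, t_n^-]$) with $\|(\phi_n^\pm)^{(k)}\|_\infty \leq C_k \eta_n^{-k}$ for $k \leq 2$, and define
\[
    w_n(x) := \begin{cases} x_2 a + c_n^+ & \text{if } x_2 \geq 2h/3, \\ (1-\phi_n^+)u_n + \phi_n^+ (x_2 a + c_n^+) & \text{if } t_n^+ < x_2 < 2h/3, \\ u_n(x) & \text{if } t_n^- \leq x_2 \leq t_n^+, \\ (1-\phi_n^-)u_n + \phi_n^- (-x_2 a + c_n^-) & \text{if } -2h/3 < x_2 < t_n^-, \\ -x_2 a + c_n^- & \text{if } x_2 \leq -2h/3. \end{cases}
\]
Using \eqref{assump:quadratic_lipschitz} to control $W(\nabla w_n)$ and expanding $\nabla^2 w_n$, the extra energy in each transition strip is bounded by a sum of the original energy density, $\eta_n^{-2}|u_n - x_2 a - c_n^+|^2$, and cross-terms of lower order. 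A slice-type Poincaré inequality converts the slice trace bound on $\nabla u_n - A$ into an $O(\eta_n)$ bound on $|u_n - x_2 a - c_n^+|^2$, cancelling the $\eta_n^{-2}$ factor up to a residual $o(1)$. Together with $w_n \to u_0$ in $L^1$ (which is clear from $c_n^\pm \to 0$ and the smallness of the modified region), this yields $\limsup_n E_{\eps_n}(w_n, q_h) \leq K^*$, making $(w_n, \eps_n)$ an optimal profile sequence with the required affine structure.

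The main obstacle lies in the delicate balance between the transition width $\eta_n$ and the cutoff derivative bounds, which scale like $\eta_n^{-k}$: one needs the slice trace integrals to vanish fast enough to absorb these negative powers. This is exactly the role of the Fubini--Chebyshev argument, which guarantees that for any prescribed vanishing rate $\eta_n$ one can find levels $t_n^\pm$ in the corresponding intervals with arbitrarily small slice trace integrals; choosing $\eta_n$ as a null sequence slower than any fixed rate determined by the $o(1)$ bulk estimate makes all contributions close simultaneously.
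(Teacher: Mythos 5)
The paper itself does not reproduce a proof of this proposition; it cites it directly from Conti--Fonseca--Leoni (see Remark \ref{remark:vertical_boundary_conditions}). So the comparison point is whether your blind reconstruction is sound, and whether it matches the scaling used both in that reference and in the closely parallel Lemma \ref{lemma:horizontal_boundary_condition} and Lemma \ref{app:estimate_of_interpolation} of the present paper.

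Your overall strategy (localise the energy, rule out the wrong well by $L^1$-compactness, Fubini--Chebyshev a good level, interpolate with a cutoff, Poincar\'e from the slice) is the right template, and the choice $c_n^\pm := \fint_q u_n(\cdot, t_n^\pm)\,dx_1 \mp t_n^\pm a$ is exactly the right normalisation. However, there is a genuine gap in how the cutoff width is chosen, and it shows up in the potential term $\tfrac{1}{\eps_n}\int W(\nabla w_n)$. You interpolate over a strip of width $\eta_n$ (a slowly vanishing sequence, independent of $\eps_n$) with a cutoff whose derivative scales like $\eta_n^{-1}$. Writing out $\nabla w_n - \nabla u_n$ and using \eqref{assump:quadratic_lipschitz}, the extra potential energy on that strip contains the contribution
\[
\frac{C}{\eps_n}\int_{\text{strip}}\Bigl(|\nabla u_n - A|^2 + \eta_n^{-2}\,|u_n - x_2 a - c_n^+|^2\Bigr)\,dx
\;\leq\; C\,\frac{\delta_n}{\eps_n} + C\,\frac{\text{slice bound}}{\eps_n\,\eta_n},
\]
after your Poincar\'e estimate, where $\delta_n := \int_{q\times(2h/3-\eta_n,\,2h/3)}|\nabla u_n - A|^2 \to 0$. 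Your bulk estimate only gives $\delta_n = o(1)$; there is no a priori relation $\delta_n \ll \eps_n$, so $\delta_n/\eps_n$ need not vanish. Your closing heuristic (``choose $\eta_n$ slower than the bulk rate'') goes in the wrong direction here: a slowly vanishing $\eta_n$ tames the $\eta_n^{-2}$ factor in the Hessian term $\eps_n\int|\nabla^2 w_n|^2$, but does nothing to absorb the extra $1/\eps_n$ in front of the potential.

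The fix --- and what both the paper (Lemma \ref{lemma:horizontal_boundary_condition}, Lemma \ref{app:estimate_of_interpolation}) and Conti--Fonseca--Leoni actually do --- is to interpolate over a strip of width comparable to $\eps_n$, not over the full Chebyshev window. Concretely: subdivide the window into $m_n := \lfloor 1/\eps_n\rfloor$ subintervals, use the pigeonhole/combinatorics Lemma \ref{lemma:combinatorics_lemma} to find one subinterval $I_{k_0}$ of width $\sim \delta\eps_n$ on which both the energy $E_{\eps_n}(u_n, (I_{k_0})_*)$ \emph{and} the mass $\int_{(I_{k_0})_*}\tfrac{1}{\delta^2}|\nabla u_n - \nabla u_0|^2 + |u_n - u_0|$ are at most the average (so of order $\eps_n$ times the total), pick the slice $t_n^+ \in I_{k_0}$ inside this good subinterval, and interpolate across $I_{k_0}$ with a cutoff of slope $\sim 1/(\delta\eps_n)$. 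Then the Poincar\'e step produces exactly the extra factor of $\eps_n$ needed to cancel $1/\eps_n$, and the interpolation error becomes $O(\delta)$ or $o(1)$ as required. Without this twofold selection (thin strip of width $\sim\eps_n$ with controlled \emph{density}, plus a good slice inside it) the argument does not close.

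A secondary, smaller issue: the step ``$L^1$-convergence rules out the wrong well'' deserves a word, since it relies on the $H^1$-compactness from Conti--Fonseca--Leoni (the paper invokes ``$u_n \to u_0$ in $H^1(Q;\R^2)$'' at the corresponding point of Lemma \ref{lemma:horizontal_boundary_condition}), not just weak $L^1$ convergence of $u_n$ itself. Your cutoff slope bound $\|(\phi_n^\pm)'\|_\infty\le C\eta_n^{-1}$ is also only correct if $2h/3 - t_n^+ \gtrsim \eta_n$, which Chebyshev alone does not guarantee; this too is resolved automatically once the interpolation interval is a fixed subinterval of the partition rather than $[t_n^+, 2h/3]$.
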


Recall, that we use the notation 
\[
    \omega_* = \omega \times (-1/2, 1/2)
\]
to denote cylinders with $\omega \subset \R$. We will now present our modification of an optimal profile at the boundary:

\begin{figure}
    \centering
    \includegraphics[width=0.75\linewidth]{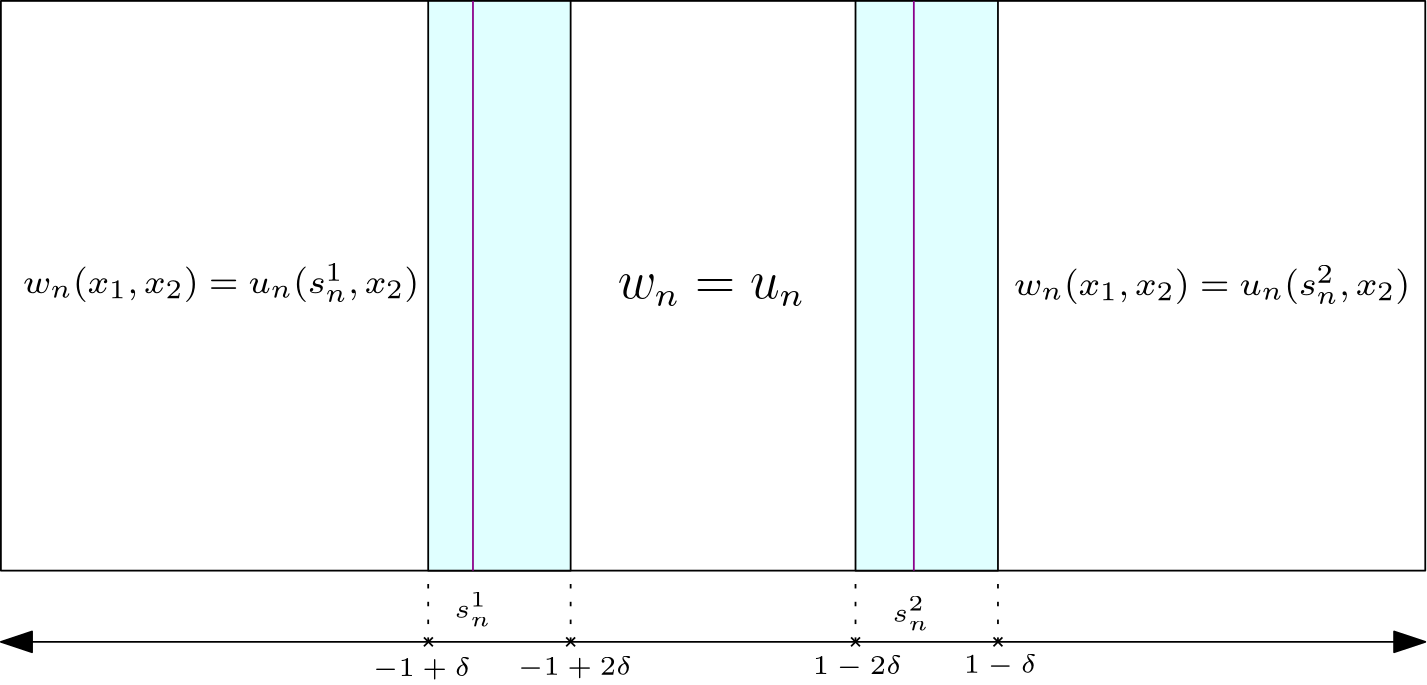}
    \caption{The modification from Lemma \ref{lemma:horizontal_boundary_condition}. A De Giorgi type argument allows one to choose suitable $s_n^1, s_n^2$ such that the energy along the trace of $u_n$ is bounded along ${s^i_n} \times (-1/2, 1/2)$ in a suitable way whilst keeping the energy of the modification $w_n$ in the light blue area proportional to $\delta$.}
    \label{fig:horizontal_boundary_condition_illustration}
\end{figure}

\begin{lemma}\label{lemma:horizontal_boundary_condition}
    Let $\delta \in (0, 1/4)$, $\tau \in (1, 2)$, and let $(u_n, \eps_n) \subset H^2(Q, \R^2) \times (0,1)$ be an optimal profile sequence with respect to $q$. Then, there exists $C_\tau > 0$ and $n_0 \in \N$ such that for every $n \geq n_0$ there exists a sequence $(w_n) \subset H^2((-1/2 - \delta, 1/2 + \delta))$ with the following properties:
    \begin{itemize}
        \item We have $w_n = u_n$ in $(-1/2 + 2\delta, 1/2 - 2\delta)_*$.
        \item There exists $s_n^1 \in (- 1/2 + 2\delta, - 1/2 + \delta)$ and $s_n^2 \in (1/2 - 2\delta, 1/2 - \delta)$ such that $w_n$ admits the trace values of $u_n$ in the boundary regions: For all $x_2 \in (-1/2, 1/2)$ we have
        \begin{itemize}
            \item $w_n(x_1, x_2) = u_n(s_n^1, x_2) $ for all $x_1 \in (-1/2 - \delta, -1/2 + \delta)$, and
            \item $w_n(x_1, x_2) = u_n(s_n^2, x_2) $ for all $x_1 \in (1/2 - \delta, 1/2 + \delta)$.
        \end{itemize}
        \item The following energy estimate along the trace holds: For every $i \in \{1,2\}$ we have
        \begin{align}\label{eq:trace_energy_condition}
            \int_{-\frac 1 2}^{\frac 1 2} \frac 1 {\eps_n} W\left((\nabla u_n)(s_n^i, x_2)\right) + \eps_n  |\nabla^2 u_n(s_n^i, x_2)|^2 \, dx_2 < \tau K^*.
        \end{align}
        \item We have
        \[
            \limsup_{n \to \infty} E_{\eps_n}(w_n, ((-1/2 + 2\delta, -1/2 + \delta)\cup(1/2 - 2\delta, 1/2 - \delta))_*) \leq C_{\tau}\delta.
        \]
        
    \end{itemize}
    As a consequence,
    \[
        \limsup_{n \to \infty} E_{\eps_n}(w_n, (-1/2 - \delta, 1/2 + \delta)_*) \leq K^*(1 + C_{\tau}\delta).
    \]
    See Figure \ref{fig:horizontal_boundary_condition_illustration} for illustration.
\end{lemma}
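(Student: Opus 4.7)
The plan is (i) to use local optimality on the side strip $\tilde q_L := (-1/2+\delta,-1/2+2\delta)$, (ii) to pick a good slice $s_n^1 \in \tilde q_L$ via Chebyshev's inequality, and (iii) to extend $u_n$ constantly in $x_1$ from that slice using a cut-off of width $2\eps_n$ (crucially, \emph{not} of width $\delta$).

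By Lemma~\ref{lemma:local_optimality_of_optimal_profiles}, $(u_n,\eps_n)$ is an optimal profile sequence on $(\tilde q_L)_*$, hence $E_{\eps_n}(u_n,(\tilde q_L)_*) \to \delta K^*$. By Fubini the slice energy $f_n(s) := \int_{-1/2}^{1/2}[\tfrac{1}{\eps_n}W(\nabla u_n(s,x_2)) + \eps_n|\nabla^2 u_n(s,x_2)|^2]\,dx_2$ is well-defined for a.e.\ $s\in\tilde q_L$ and integrates to $\leq E_{\eps_n}(u_n,(\tilde q_L)_*) < \tau K^*\delta$ for $n$ large; Chebyshev's inequality then yields $s_n^1 \in (-1/2+\delta+\eps_n,\,-1/2+2\delta-\eps_n)$ with $f_n(s_n^1) < \tau K^*$, and $s_n^2$ is picked symmetrically on the right. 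Fix $\eta\in C^\infty(\R;[0,1])$ with $\eta \equiv 1$ on $(-\infty, s_n^1-\eps_n]$, $\eta \equiv 0$ on $[s_n^1+\eps_n,\infty)$ and $|\eta^{(k)}| \leq C_k\eps_n^{-k}$, and set
\[
w_n(x_1,x_2) := \eta(x_1)\,u_n(s_n^1,x_2) + \bigl(1-\eta(x_1)\bigr)u_n(x_1,x_2)
\]
on the left, symmetrically near $s_n^2$, and $w_n := u_n$ on the untouched middle; then $w_n \in H^2$ and the three pointwise identities of the lemma hold.

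Split $(\tilde q_L)_*$ into the $x_1$-constant extension $E_L := (-1/2+\delta,s_n^1-\eps_n)_*$, the thin interpolation layer $T' := (s_n^1-\eps_n,s_n^1+\eps_n)_*$, and the unchanged piece $U_L := (s_n^1+\eps_n,-1/2+2\delta)_*$. On $E_L$, $\nabla w_n = (0,\partial_2 u_n(s_n^1,\cdot))$, so \eqref{assump:quadratic_from_below_first_variable} and quadratic growth yield $W(\nabla w_n) \leq CW(\nabla u_n(s_n^1,\cdot))$ and $|\nabla^2 w_n|^2 \leq |\nabla^2 u_n(s_n^1,\cdot)|^2$, whence $E_{\eps_n}(w_n,E_L) \leq C\delta f_n(s_n^1) \leq C\tau K^*\delta$. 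On $U_L$, $w_n = u_n$ so $E_{\eps_n}(w_n,U_L) \leq E_{\eps_n}(u_n,(\tilde q_L)_*) \to \delta K^*$. On $T'$, the key observation is that the 1D cross-sections of $T'$ at fixed $x_2$ have length $2\eps_n$, giving the Poincar\'e estimates
\[
\int_{T'}|u_n(s_n^1,\cdot) - u_n|^2 \leq 2\eps_n^2 \int_{T'}|\partial_1 u_n|^2, \qquad \int_{T'}|\partial_2 u_n(s_n^1,\cdot) - \partial_2 u_n|^2 \leq 2\eps_n^2 \int_{T'}|\partial_{12} u_n|^2.
\]
Combined with $|\partial_1 u_n|^2 \leq CW(\nabla u_n)$ (hence $\int_{T'}|\partial_1 u_n|^2 \leq C\eps_n\delta K^*$) and $\eps_n\int_{T'}|\nabla^2 u_n|^2 \leq E_{\eps_n}(u_n,(\tilde q_L)_*)$, direct differentiation of $w_n$ and the reference estimate
\[
W(\nabla w_n) \leq CW\bigl((0,\partial_2 u_n(s_n^1,\cdot))\bigr) + C|\partial_1 w_n|^2 + C|\partial_2 w_n - \partial_2 u_n(s_n^1,\cdot)|^2
\]
show that every term in $\tfrac{1}{\eps_n}W(\nabla w_n) + \eps_n|\nabla^2 w_n|^2$ is $O(\delta)$ or $o(1)$: the $|\eta^{(k)}|^2 \leq C\eps_n^{-2k}$ factors are exactly absorbed by the $\eps_n^k$ Poincar\'e gains (for instance $\tfrac{1}{\eps_n}\int_{T'}|\partial_2 w_n - \partial_2 u_n(s_n^1,\cdot)|^2 \leq \tfrac{1}{\eps_n} \cdot 2\eps_n^2 \cdot (\delta K^*/\eps_n) = 2\delta K^*$). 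Summing, $\limsup_n E_{\eps_n}(w_n,(\tilde q_L)_*) \leq C_\tau\delta$; the right strip is symmetric, and adding the untouched middle's limit $\to K^*$ yields the final bound $\limsup_n E_{\eps_n}(w_n,(-1/2-\delta,1/2+\delta)_*) \leq K^*(1+C_\tau\delta)$.

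The main obstacle is recognising that the transition must be of width $2\eps_n$, not $\delta$: with a width-$\delta$ interpolation the critical term $\tfrac{1}{\eps_n}\int|\partial_2 w_n - \partial_2 u_n(s_n^1,\cdot)|^2$ would behave as $\delta^3 K^*/\eps_n^2$ and blow up, whereas the width-$2\eps_n$ choice shrinks the Poincar\'e interval so that the two $\eps_n$-factors exactly cancel the $\eps_n^{-1}$ prefactor coming from $1/\eps_n$, leaving an $O(\delta)$ contribution.
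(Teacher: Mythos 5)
Your proof is correct and reaches the same conclusion by a genuinely different (and somewhat leaner) route than the paper. Both constructions interpolate over a layer of width $\Theta(\eps_n)$ — you are right that this is the non-negotiable ingredient, since with a width-$\delta$ layer the cross term $\tfrac{1}{\eps_n}\int|\partial_2 w_n - \partial_2 u_n(s_n^1,\cdot)|^2$ scales like $\delta^3/\eps_n^2$ and diverges. The paper, however, runs a De Giorgi-type pigeonhole: it subdivides the side strip into $m_n=\lfloor 1/\eps_n\rfloor$ subintervals $I_k$ and invokes the combinatorial Lemma~\ref{lemma:combinatorics_lemma} to locate a good $I_{k_0}$ on which three quantities are \emph{simultaneously} small — the energy on $I_{k_0}$, the energy on $I_{k_0}\cup I_{k_0-1}$, and the $H^1\times L^1$ distance $\int|\nabla u_n-\nabla u_0|^2 + |u_n-u_0|$ — and then chooses $s_n\in I_{k_0}$. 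This third quantity is what feeds into the Poincar\'e step in Lemma~\ref{app:estimate_of_interpolation}, which bounds $\int_{I_*}|u-\bar u|^2$ through $|\partial_1 u|\le|\nabla u - \nabla u_0|$. You bypass all of this: a single Chebyshev application on $\tilde q_L$ gives the slice $s_n^1$, and then, instead of comparing with $u_0$, you control the Poincar\'e term directly via the structural bound $|\partial_1 u_n|^2\le CW(\nabla u_n)$ from \eqref{assump:quadratic_from_below_first_variable}, which gives $\int_{T'}|\partial_1 u_n|^2 \le C\eps_n \,E_{\eps_n}(u_n,(\tilde q_L)_*)$. Consequently your argument never needs the $L^1$/$H^1$ convergence of $u_n$ to $u_0$ on the chosen layer, and the three-way pigeonhole collapses to a single one. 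Two small remarks to make the writeup airtight: the Chebyshev step should note explicitly that the bad set has measure at most $(\tilde\tau/\tau)\delta<\delta$, so for $n$ large the interval $(-1/2+\delta+\eps_n,-1/2+2\delta-\eps_n)$ of length $\delta-2\eps_n$ still intersects the good set; and your constant in the Poincar\'e inequality should read $(2\eps_n)^2$ rather than $2\eps_n^2$, which of course only changes the constant $C_\tau$.
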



\begin{proof}
    We will only treat the extension of $u_n$ to the right side of $Q$, as the other side can be discussed analogously. By Lemma \ref{lemma:local_optimality_of_optimal_profiles}, we have 
    \[
        \lim_{n \to \infty} E_{\eps_n}(u_n, (1/2 - \delta, 1/2)_*) = K^*\delta.
    \]
    Let $\tilde \tau \in (1,\tau)$. We infer 
    \[
        E_{\eps_n}(u_n, (1/2 - \delta, 1/2)_*) < \tilde \tau K^* \delta
    \]
    for $n$ large enough. We divide the interval $(1/2 - 2\delta, 1/2 - \delta)$ into $m_n := \lfloor \frac 1 {\eps_n} \rfloor$ subintervals 
        $$I_k := \left(1/2 - 2\delta + \frac{k-1}{m_n}\delta, \, 1/2 - 2\delta + \frac{k}{m_n}\delta \right) $$ 
    with $k = 1,.., m_n$. We observe that  
    \[
        \sum_{k = 1}^{m_n} E_{\eps_n}(u_n, (I_k)_*) < \tilde \tau K^* \delta.
    \]
    On the other hand, for large $n$ we also obtain
    \[
        \sum_{k = 2}^{m_n} \int_{(I_{k} \cup I_{k-1})_*}|\nabla u_n - \nabla u_0|^2 + |u_n - u_0| \, dx < \delta^3K^* \frac{(\tau-\tilde \tau)(\tilde \tau - 1)}{2\tilde \tau}
    \]
    since $u_n \to u_0$ in $H^1(Q, \R^2)$. Using Lemma \ref{lemma:combinatorics_lemma} in the appendix, we find $k_0 \in \{2, .., m_n\}$ with 
    \[
        E_{\eps_n}(u_n, (I_{k_0})_*) \leq \frac{\tilde \tau \delta}{m_n} K^*,
    \]
    \[
        E_{\eps_n}(u_n, (I_{k_0} \cup I_{k_0 - 1})_*) \leq \frac{4\tilde \tau}{(\tilde \tau - 1)}\frac{\delta}{m_n}  K^*
    \]
    and 
    \[
        \int_{(I_{k_0} \cup I_{k_0 - 1})_* } |\nabla u_n - \nabla u_0|^2 + |u_n - u_0| \, dx < \frac{(\tau - \tilde \tau ) \delta^3}{m_n} K^*.
    \]
    This implies
    \[
        \int_{(I_{k_0})_*} \frac 1 {\eps_n} W(\nabla u_n) + \eps_n |\nabla^2 u_n|^2 + \frac 1 {\delta^2} |\nabla u_n - \nabla u_0|^2 + |u_n - u_0|  \, dx < \frac{\tau \delta}{m_n}K^*
    \]
    and
    \begin{align}\label{eq:trace_estimate_1}
        E_{\eps_n}(u_n, I_{k_0} \cup I_{k_0 - 1}) +  \int_{(I_{k_0} \cup I_{k_0 - 1})_*}\frac 1 {\delta^2} |\nabla u_n - \nabla u_0|^2 + |u_n - u_0|  \, dx < \frac{C_\tau \delta}{m_n}K^*.
    \end{align}
    In particular, we can choose $s_n \in I_{k_0}$ such that
    \begin{align}\label{eq:trace_estimate_2}
        \int_{(-\frac 1 2, \frac 1 2)} \frac 1 {\eps_n} W(\nabla u_n(s_n, t)) + \eps_n |\nabla^2 u_n(s_n, t)|^2 + \frac 1 {\delta^2} |(\nabla u_n - \nabla u_0)(s_n, t)|^2 + |(u_n - u_0)(s_n, t)|  \, dt < \tau K^*.
    \end{align}
    Let $\varphi_{n} \in C^\infty(\R; [0,1])$ such that $\varphi_{n}(s) = 1$ for $s \geq s_n$, $\varphi_{n}(s) = 0$ for $s \leq s_n - \delta/m_n$, and
    \[
        |\varphi_n'| \leq \frac{Cm_n}{\delta}  \qquad |\varphi_n''| \leq \frac {Cm_n^2}{\delta^2}. 
    \]
    Now, define 
    \[
        w_{n}(x) := \varphi_{n}(x_1)u_n(s_n, x_2) + (1 - \varphi_{n}(x_1))u_n(x)
    \]
    on $Q$. By construction the first two requirements on $w_n$ are fulfilled.
    The main work, namely showing that 
    \begin{align}\label{trace_estimate_3}
         E_{\eps_n}(w_n, (1/2 - 2\delta, 1/2)_*) \leq C_\tau\delta.
    \end{align}
    holds, has already been done in Theorem 6.3 in \cite{ContiFonsecaLeoni2002} where (6.9) and (6.11) in \cite{ContiFonsecaLeoni2002} corresponds to our $\eqref{eq:trace_estimate_1}$ and $\eqref{eq:trace_estimate_2}$ estimates from which \eqref{trace_estimate_3} is derived. A simplified version of this fact can be found in the appendix, see Lemma \ref{app:estimate_of_interpolation}.
    Note now that extending $w_n$ to $(-1/2, 1/2 + \delta)_*$ by setting $w_n(x_1, x_2) := u_n(s_n, x_2)$ does only change the energy proportionally to $\delta$ since
    \begin{align*}
        \int_{(\frac 1 2, \frac 1 2 + \delta)_*} \frac 1 {\eps_n} W(\nabla w_n(x)) \, dx &= \delta \int_{(-\frac 1 2, \frac 1 2)} \frac 1 {\eps_n} W((0, \partial_2 u_n(s_n, t)) \, dt \\
        &\leq C\delta \left( \frac 1 {\eps_n}\int_{(-\frac 1 2, \frac 1 2)} |\partial_1 u_n(s_n, t)|^2 + W((\nabla u_n)(s_n, t)) \, dt \right) \\
        &\leq C\delta.
    \end{align*}
    Here, we employed \eqref{assump:quadratic_lipschitz}, \eqref{assump:quadratic_from_below_first_variable} and \eqref{eq:trace_estimate_2}. Moreover, we observe 
    \[
        \nabla^2 w_n(x) = \begin{pmatrix}
             0 & 0 \\ 0 & \partial_{22} u_n(s_n, x_2 )
        \end{pmatrix}.
    \]
    Consequently, using again \eqref{eq:trace_estimate_2}, we derive
    \[
        E_\eps(w_n, (1/2, 1/2 + \delta)_*) \leq C\delta
    \]
    which concludes the proof.
\end{proof}

\subsection{Optimal interpolation of traces}\label{sec:optimal_interpolation_of_traces}

For $\eps > 0$ we define the energy of a curve $\varphi \in  W^{1,1}_{\rm{loc}}(\R; \R^{2 \times 2})$ by 
\[
    I_\eps(\varphi) := \int_\R \frac{1}{\eps}W(\varphi) + \eps |\varphi'|^2 \, ds.
\]
We note that by applying the standard Modica--Mortola trick (Young inequality) we have 
\[
    L_W(\varphi) \leq I_\eps(\varphi).
\]
where $L_W$ is given in Definition \ref{def:geodesic_distance}. Now, we will begin the discussion about the procedure to glue two traces together in an optimal way. For this, we will derive energy bounds of certain transitional maps. We start with an observation on vertical translations of traces. 
\begin{lemma}\label{lemma:trace_translation_glueing}
    Let $\varphi \in  W^{1,1}_{\rm{loc}}(\R; \R^{2 \times 2})$ and $\eps \in (0,1)$ with $I_{\eps}(\varphi) < \infty$. Then, for each $\sigma > 0$ and $\beta \in (-1,1)$ there exists a map $\tilde v \in H^2((-\sigma, \sigma)_*, \R^2)$ and a constant $C_\sigma > 0$ with the following properties:
    \begin{enumerate}[label = (\roman*)]
        \item $\partial_2 \tilde v(x_1, x_2 ) = \varphi_2(x_2)$ near $x_1 = -\sigma$ for all $x_2$,
        \item $\partial_2 \tilde v(x_1, x_2) = \varphi_2(x_2 + \beta)$ near $x_1 = \sigma$ for all $x_2$,
        \item $\partial_1 \tilde v = 0$ near $x_1 = \pm \sigma$,
        \item We have the energy estimate
        \[
            E_{\eps}(\tilde v, (-\sigma, \sigma)_*) \leq C \left( \sigma + C_\sigma \frac{\beta^2}{\eps} \right)(1 + I_\eps(\varphi)) .
        \]
    \end{enumerate}
\end{lemma}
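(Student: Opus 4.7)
The plan is to use a shifted primitive ansatz. Let $V\colon\R\to\R^2$ be any primitive of $\varphi_2$, that is, $V'(s)=\varphi_2(s)$, and let $\eta\in C^\infty([-\sigma,\sigma])$ be a smooth monotone transition with $\eta\equiv 0$ on $[-\sigma,-\sigma+\sigma/4]$ and $\eta\equiv\beta$ on $[\sigma-\sigma/4,\sigma]$, enjoying the pointwise bounds $\|\eta'\|_\infty\leq C|\beta|/\sigma$ and $\|\eta''\|_\infty\leq C|\beta|/\sigma^2$. I would then define
\[
    \tilde v(x_1,x_2)\,:=\,V\bigl(x_2+\eta(x_1)\bigr) \qquad \text{for } (x_1,x_2)\in(-\sigma,\sigma)_*.
\]
Differentiating directly yields $\partial_2\tilde v(x_1,x_2)=\varphi_2(x_2+\eta(x_1))$ and $\partial_1\tilde v(x_1,x_2)=\eta'(x_1)\,\varphi_2(x_2+\eta(x_1))$, from which (i)--(iii) follow immediately, and $\tilde v\in H^2$ follows from $\varphi\in H^1_{\mathrm{loc}}(\R)$ (guaranteed by $I_\eps(\varphi)<\infty$).

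For the potential term I would apply the quadratic Lipschitz bound \eqref{assump:quadratic_lipschitz} with $M=\nabla\tilde v$ and $N=\varphi(x_2+\eta(x_1))$. Since $M-N=(\eta'\varphi_2-\varphi_1,0)$, combining with $|\varphi_1|^2\leq C W(\varphi)$ from \eqref{assump:quadratic_from_below_first_variable} and $|\varphi_2|^2\leq C(1+W(\varphi))$ (a direct consequence of \ref{assump:quadratic_growth_around_wells}) yields the pointwise bound
\[
    W(\nabla\tilde v)(x_1,x_2)\,\leq\, C\bigl(1+|\eta'(x_1)|^2\bigr)\,W(\varphi(x_2+\eta(x_1))) + C|\eta'(x_1)|^2.
\]
Fubini together with the change of variables $y=x_2+\eta(x_1)$ at fixed $x_1$ then reduces the $x_2$-integral of $W(\varphi(x_2+\eta))$ to an integral over a subinterval of $\R$ of length $1$, which is bounded by $\int_\R W(\varphi)\leq\eps\,I_\eps(\varphi)$. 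Using $\|\eta'\|_{L^2}^2\leq C\beta^2/\sigma$, this produces
\[
    \int_{(-\sigma,\sigma)_*}\frac{1}{\eps}W(\nabla\tilde v)\,dx \,\leq\, C\sigma\,I_\eps(\varphi) + \frac{C\beta^2}{\sigma}I_\eps(\varphi) + \frac{C\beta^2}{\sigma\eps},
\]
which is absorbed into the right-hand side of (iv) after using $\eps<1$ and bundling $\sigma^{-1}$ into $C_\sigma$.

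The Hessian term is treated analogously. The chain rule gives $\partial_{22}\tilde v=\varphi_2'(x_2+\eta)$, $\partial_{12}\tilde v=\eta'\varphi_2'(x_2+\eta)$, and $\partial_{11}\tilde v=\eta''\varphi_2(x_2+\eta)+(\eta')^2\varphi_2'(x_2+\eta)$, hence
\[
    |\nabla^2\tilde v|^2 \,\leq\, C|\eta''|^2\bigl(1+W(\varphi(x_2+\eta))\bigr) + C\bigl(1+|\eta'|^2+|\eta'|^4\bigr)|\varphi_2'(x_2+\eta)|^2.
\]
Using $\int_\R|\varphi'|^2\leq\eps^{-1}I_\eps(\varphi)$, the estimates $\|\eta''\|_{L^2}^2\leq C\beta^2/\sigma^3$, $\|\eta'\|_{L^2}^2\leq C\beta^2/\sigma$, $\|\eta'\|_\infty\leq C|\beta|/\sigma$, and $|\beta|<1$ to collapse $\beta^4$ into $\beta^2$, the same Fubini-plus-change-of-variables argument controls $\int\eps|\nabla^2\tilde v|^2\,dx$ by an expression of the same form.

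The main obstacle is organisational rather than analytic: one must split contributions cleanly into a bulk part proportional to $\sigma$ (the cost along the whole strip of carrying the $\varphi$-profile) and a transition part proportional to $\beta^2/\eps$ (the cost of forcing the horizontal shift through $\eta$), and then bundle every $\sigma^{-k}$ coming from cutoff estimates into a single constant $C_\sigma$. The structural reason this works is that the horizontal shift is only paid on the support of $\eta'$ and $\eta''$, so the problematic factor $1/\eps$ multiplies $\|\eta'\|_{L^2}^2+\eps\|\eta''\|_{L^2}^2\leq C_\sigma\beta^2$. Once this split is made, $\eps,|\beta|<1$ can be used to collect mixed terms into the factor $(1+I_\eps(\varphi))$, yielding precisely the inequality in (iv).
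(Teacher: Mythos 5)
Your proof is correct and takes essentially the same route as the paper: you use the identical shifted-primitive ansatz $\tilde v(x_1,x_2)=V(x_2+\eta(x_1))$ (the paper writes this as $\tilde v(x_1,x_2)=\int_{-1/2}^{x_2+\rho(x_1)}\varphi_2$ with $\rho$ playing the role of your $\eta$), and you control the energy via the chain rule, a change of variables in $x_2$, and the derivative bounds on the cutoff. The only difference is cosmetic: you package the pointwise potential bound through \eqref{assump:quadratic_lipschitz} with $N=\varphi(x_2+\eta)$, whereas the paper invokes \ref{assump:quadratic_growth_around_wells} to reduce to $|\partial_1\tilde v|^2+\min|\partial_2\tilde v\pm a|^2$ directly; both yield the same terms.
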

\begin{proof}
    Let $\beta \in (-1,1)$ and $\sigma > 0$. We define
    \[
        \tilde v(x_1, x_2) := \int_{-1/2}^{x_2 + \rho(x_1)} \varphi_2(s) \, ds 
    \]
    for $(x_1, x_2) \in (-\sigma, \sigma)_*$, where $\rho \in C^\infty(-\sigma,\sigma)$ with 
    \begin{itemize}
        \item $\rho(-\sigma) = 0$, $\rho(\sigma) = \beta$, and $\rho' = 0$ near $\pm \sigma$,
        \item $|\rho'| \leq C\beta/\sigma$ and $|\rho''| \leq C\beta/\sigma^2$.
    \end{itemize}
    Notice that 
    \[
        \partial_2 \tilde v(x_1, x_2) = \varphi_2(x_2 + \rho(x_1))
    \]
    and 
    \[
        \partial_{22} \tilde v(x_1, x_2) = \varphi_2'(x_2 + \rho(x_1)).
    \]
    Therefore, by the choice of $\rho$ we know that $\tilde v$ fulfils (i) and (ii). By \ref{assump:quadratic_growth_around_wells} we have
    \begin{align*}
        \int_{(-\sigma, \sigma)_*} \frac{1}{\eps}\min\{ |\partial_2 \tilde v \pm a|^2 \} + \eps |\partial_{22} \tilde v|^2 \, dx &\leq \int_{(-\sigma, \sigma)_*} \frac{1}{\eps}\min\{ |\varphi_2(x_2 + \rho(x_1)) \pm a| \} + \eps |\varphi_2'(x_2 + \rho(x_1))|^2 \, dx\\ 
        &\leq \sigma \int_{\R} \frac{1}{\eps}\min\{ |\varphi_2(x_2) \pm a| \} + \eps |\varphi_2'(x_2)|^2 \, dx\\
        &= \sigma I_\eps(\varphi).
    \end{align*}
    For the remaining partial derivatives, we find
    \[
        \partial_1 \tilde v(x_1, x_2) = \varphi_2(x_2 + \rho(x_1))\rho'(x_1),
    \]
    \[
        \partial_{11} \tilde v(x_1, x_2) = \varphi_2'(x_2 + \rho(x_1))(\rho'(x_1))^2 + \varphi_2(x_2 + \rho(x_1))\rho''(x_1),
    \]
    as well as
    \[
        \partial_{12} \tilde v(x_1, x_2) = \varphi_2'(x_2 + \rho(x_1))\rho(x_1).
    \]
    We now estimate the first derivative 
    \begin{align*}
        \int_{-\sigma}^\sigma \int_{-1/2}^{1/2} \frac{1}{\eps}| \partial_1 \tilde v|^2 \, dx_2 \, dx_1 
        &\leq \frac{C\beta^2}{\sigma^2}\int_{-\sigma}^\sigma \int_\R \frac{1}{\eps} |\varphi_2|^2 \, dx_2 \, dx_1 \\ 
        &\leq \frac{C\beta^2}{\sigma^2}\int_{-\sigma}^\sigma \int_\R \frac{1}{\eps} (\min|\varphi_2 \pm a|^2 + 1) \, dx_2 \, dx_1  \\
        &\leq \frac{C\beta^2}{\sigma}\left(I_{\eps}(\varphi) + \frac{1}{\eps} \right),
    \end{align*}
    and the remaining second derivatives where we repeatedly use $|\rho^{k}| \leq \beta/(\sigma^k)$ for $k \in \{1,2\}$, $|\beta| \leq 1$, \eqref{assump:quadratic_lipschitz} and \eqref{assump:quadratic_from_below_first_variable}
    \begin{align*}
        \int_{-\sigma}^\sigma \int_{-1/2}^{1/2} \eps| \partial_{11} \tilde v|^2 \, dx_2 \, dx_1 
        &\leq C\int_{-\sigma}^\sigma \int_\R  |\varphi_2'(x_2 + \rho(x_1))(\rho'(x_1))^2|^2 + |\varphi_2(x_2 + \rho(x_1))\rho''(x_1)|^2 \, dx_2 \, dx_1\\
        &\leq \frac{C\eps (\beta^4 + \beta^2)}{\sigma^4} \int_{-\sigma}^\sigma \int_\R  |\varphi_2'|^2 + |\varphi_2|^2 \, dx_2 \, dx_1\\
        &\leq \frac{C\beta^2}{\sigma^3}\left( I_\eps(\varphi) + \eps \right), 
    \end{align*}
    and 
    \begin{align*}
        \int_{-\sigma}^\sigma \int_{-1/2}^{1/2} \eps| \partial_{12} \tilde v|^2 \, dx_2 \, dx_1 &\leq  \frac{C\eps \beta^2}{\sigma^2} \int_{-\sigma}^\sigma \int_\R  |\varphi_2'|^2 \, dx_2 \, dx_1 \\
        & \leq \frac{C\beta^2}{\sigma} I_\eps(\varphi).
    \end{align*}
    Putting these estimates together, we derive 
    \begin{align*}
        E_\eps(\tilde v, (-\sigma, \sigma)_*) &\leq C\int_{(-\sigma, \sigma)_*} \frac{1}{\eps}(|\partial_1 \tilde v|^2 + \min |\partial_2 \tilde v \pm a|^2) + \eps|\nabla^2 \tilde v|^2 \, dx \\
        &\leq C \left(C_\sigma \left(\frac{\beta^2}{\eps} +  I_\eps(\varphi) \right) + \sigma I_\eps(\varphi) \right)  \\
        &\leq  C \left( \sigma + C_\sigma \frac{\beta^2}{\eps} \right)(1 + I_\eps(\varphi)) . \\
    \end{align*}
    This concludes the proof.
\end{proof}
In the next lemma, we will observe that, as long as phase separating points (cf.\ Definition \ref{def:admissable_curves}) coincide, we can transition between traces of gradients such that the energy of the transitional map is controlled in terms of the width of the transitional region and the height of the interface.
\begin{lemma}\label{lemma:trace_the_same_midpoint_glueing}
    Let $\varphi, \psi \in  W^{1,1}_{\rm{loc}}(\R, \R^{2\times 2})$, $K > 0$ and $\eps \in (0,1)$  with $I_\eps(\varphi), I_{\eps}(\psi) < K < 3d_W(A,B)$. 
    Furthermore, let $s_\varphi = s_\varphi^{\alpha_K}, \, s_\psi = s_\psi^{\alpha_K}$ as in Lemma \ref{lemma:estimates_for_the_difference_and_discussion_for_alpha}. Suppose that 
    \begin{itemize}
        \item $s_\varphi = s_\psi$, and
        \item there exists $h> \eps >0 $ such that $\varphi(s) = \psi(s) = \pm A$ for $\pm s \geq \pm h$. 
    \end{itemize} 
    Then, for each $\sigma > \eps > 0$ there exists a map $\tilde w = \tilde w_{\sigma} \in H^2((-\sigma, \sigma)_*, \R^2)$ with the following properties:
    \begin{enumerate}
        \item $\partial_2 \tilde w(x_1, .) = \varphi_2$ near $x_1 = -\sigma$,
        \item $\partial_2 \tilde w(x_1, .) = \psi_2$ near $x_1 = \sigma$,
        \item $\partial_1 \tilde w$ = 0 near $x_1 = \pm \sigma$,
        \item We have the energy estimate:
        \[
            E_\eps(\tilde w) \leq C_K\left(\frac{h}{\sigma} + \sigma\right)(I_\eps(\varphi) + I_\eps(\psi)).
        \]
    \end{enumerate}
\end{lemma}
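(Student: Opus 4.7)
The plan is to glue the two traces in the second partial derivative by a smooth one-dimensional cut-off in the $x_1$ variable. Pick $\theta \in C^\infty(\R;[0,1])$ with $\theta \equiv 1$ in a neighbourhood of $-\sigma$, $\theta \equiv 0$ in a neighbourhood of $\sigma$, and $|\theta^{(k)}| \leq C/\sigma^k$ for $k=1,2$. I would then set
\[
    \tilde w(x_1,x_2) := \int_{-1/2}^{x_2}\bigl[\theta(x_1)\varphi_2(t) + (1-\theta(x_1))\psi_2(t)\bigr]\,dt.
\]
A direct differentiation gives $\partial_2 \tilde w = \theta\varphi_2 + (1-\theta)\psi_2$ and $\partial_1 \tilde w = \theta'(x_1) F(x_2)$, where $F(x_2) := \int_{-1/2}^{x_2}(\varphi_2-\psi_2)\,dt$, along with analogous formulae for the second partial derivatives. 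The boundary requirements (1)--(3) are then immediate from the flatness of $\theta$ and $\theta'$ near $\pm\sigma$.

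The crux is a pointwise estimate of $W(\nabla\tilde w)$, which is where the same-midpoint assumption enters. Introduce the auxiliary matrix field $M(x_1,x_2) := \theta(x_1)\varphi(x_2) + (1-\theta(x_1))\psi(x_2)$, whose second column coincides with $\partial_2\tilde w$. Young's inequality gives $L_W(\varphi), L_W(\psi) \leq I_\eps(\varphi), I_\eps(\psi) < K$, so by Lemma \ref{lemma:estimates_for_the_difference_and_discussion_for_alpha}(ii) both pairs $(\varphi,\alpha_K)$ and $(\psi,\alpha_K)$ are admissible. Since $s_\varphi^{\alpha_K} = s_\psi^{\alpha_K}$ by assumption, Lemma \ref{lemma:estimates_for_the_difference_and_discussion_for_alpha}(i) yields $|\varphi-\psi|^2 \leq C_K(W(\varphi)+W(\psi))$, hence $|M-\varphi|^2 \leq C_K(W(\varphi)+W(\psi))$ and therefore, via \eqref{assump:quadratic_lipschitz}, $W(M) \leq C_K(W(\varphi)+W(\psi))$. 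Applying \eqref{assump:quadratic_lipschitz} once more to compare $\nabla\tilde w$ with $M$, whose discrepancy is concentrated in the first column, and using \eqref{assump:quadratic_from_below_first_variable} to dominate $|\varphi_1|^2,|\psi_1|^2$ by $W(\varphi),W(\psi)$, I arrive at
\[
    W(\nabla\tilde w) \leq C_K\bigl(W(\varphi) + W(\psi) + |\partial_1\tilde w|^2\bigr).
\]

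The remainder is a careful integration that exploits the fact that $\varphi_2 - \psi_2$ is supported in $[-h,h]$. By Cauchy--Schwarz one has $|F(x_2)|^2 \leq 2h\int_{-h}^h|\varphi_2-\psi_2|^2\,dt$, which together with the pointwise bound from Lemma \ref{lemma:estimates_for_the_difference_and_discussion_for_alpha}(i) gives $\int_{-1/2}^{1/2}|F|^2\,dx_2 \leq C_K h\,\eps\,(I_\eps(\varphi)+I_\eps(\psi))$. The bulk parts of $\int\eps^{-1}(W(\varphi)+W(\psi))$ and of $\int\eps|\partial_{22}\tilde w|^2$ are bounded by $C\sigma(I_\eps(\varphi)+I_\eps(\psi))$ by Fubini, while each of the three terms involving a derivative of $\theta$ (namely $\int\eps^{-1}|\partial_1\tilde w|^2$, $\int\eps|\partial_{11}\tilde w|^2$, $\int\eps|\partial_{12}\tilde w|^2$) produces either a factor $C_K h/\sigma$ or factors $C_K\eps^2/\sigma$ and $C_K h\eps^2/\sigma^3$; using $\eps<\sigma$ the latter two are absorbed into $C_K(h/\sigma + \sigma)$. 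Summing all contributions yields the claimed bound.

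The main obstacle is precisely the pointwise inequality $W(M)\leq C_K(W(\varphi)+W(\psi))$: without the condition $s_\varphi = s_\psi$, the convex combination $M$ could lie deep in the transition region even when both $\varphi$ and $\psi$ are near a well, producing an uncontrolled value of $W(M)$. The admissibility machinery of Lemma \ref{lemma:estimates_for_the_difference_and_discussion_for_alpha} is tailored exactly to forbid this by forcing both curves onto the same side of the interface at every height.
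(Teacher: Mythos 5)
Your proposal is correct and takes essentially the same route as the paper: you construct the same linear interpolation $\tilde w$ in terms of a cutoff in $x_1$ (the paper calls it $\tilde\rho$, you call it $\theta$), invoke Lemma \ref{lemma:estimates_for_the_difference_and_discussion_for_alpha} in exactly the same way to obtain the crucial pointwise bound $|\varphi-\psi|^2\leq C_K(W(\varphi)+W(\psi))$ from the coincidence of phase-separating points, and then integrate using Cauchy--Schwarz on $F(x_2)=\int_{-1/2}^{x_2}(\varphi_2-\psi_2)$ exploiting its compact support in $[-h,h]$, ultimately absorbing the $\varepsilon$-weighted cross terms via $\varepsilon<\sigma$. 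The only cosmetic difference is that you package the pointwise bound on $W(\nabla\tilde w)$ by introducing the auxiliary convex-combination matrix $M$ and applying \eqref{assump:quadratic_lipschitz} twice, whereas the paper estimates $W(\nabla\tilde w)\lesssim|\partial_1\tilde w|^2+\min|\partial_2\tilde w\pm a|^2$ directly from \ref{assump:quadratic_growth_around_wells} and then bounds $\min|\partial_2\tilde w\pm a|^2\lesssim\min|\psi_2\pm a|^2+|\varphi_2-\psi_2|^2$; both reformulations yield the same estimate.
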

\begin{proof}
    To start with, we note that $s_\varphi$ and $s_\psi$ are well defined since by the Modica--Mortola trick 
    \[
        L_W(\varphi) \leq I_\eps(\varphi) < K \leq 3d_W(A,B)
    \]
    holds.
    By Lemma \ref{lemma:estimates_for_the_difference_and_discussion_for_alpha} $(ii)$ we know that $(\varphi, \alpha_K)$ is admissible in the sense of Definition \ref{def:admissable_curves}, i.e., $s_\varphi^{\alpha_K}$ is well defined (and analogously $s_\psi^{\alpha_K}$).
    Now, we directly define 
    \[
        \tilde w(x_1, x_2) = \tilde\rho(x_1)\int_{-1/2}^{x_2} \varphi_2(s) \, ds + (1-\tilde\rho(x_1))\int_{-1/2}^{x_2} \psi_2(s) \, ds,
    \]
    where $\tilde\rho \in C^\infty((-\sigma,\sigma); [0,1])$ with $\tilde\rho(-\sigma) = 0$, $\tilde\rho(\sigma) =1$ and $\tilde \rho' = 0$ near $\pm \sigma$, $|\tilde\rho'| \leq C/\sigma$, and $|\tilde\rho''| \leq C/\sigma^2$.
    We notice that 
    \[
        \partial_1 \tilde w(x_1, x_2) = \tilde \rho'(x_1)\left(\int_{-1/2}^{x_2} \varphi_2(s) - \psi_2(s) \, ds \right),
    \]
    and 
    \[
        \partial_2 \tilde w(x_1, x_2) = \tilde\rho(x_1) \varphi_2(x_2) + (1-\tilde\rho(x_1)) \psi_2(x_2).
    \]
    Using the quadratic growth assumption \ref{assump:quadratic_growth_around_wells}, we see that
    \[
        \int_{-\sigma}^\sigma \int_{-1/2}^{1/2} W(\nabla \tilde w) \, dx_2 \, dx_1 \leq C\left(\int_{-\sigma}^\sigma \int_{-1/2}^{1/2} |\partial_1 \tilde w|^2 + \min|\partial_2 \tilde w \pm a|^2 \, dx_2 \, dx_1 \right).
    \]
    We estimate the two terms separately. For the first term, we apply Hölder's inequality to derive
    \begin{align}\label{eq:poincare_application}
        \int_{(-\sigma, \sigma)_*} |\partial_1 \tilde w|^2 \, dx \leq \frac{1}{\sigma} \left(\int_{-h}^{h} |\varphi_2(s) - \psi_2(s)| \, ds \right)^2 \leq \frac{h}{\sigma} \int_{-h}^{h} |\varphi_2 - \psi_2|^2 \, ds.
    \end{align}
    Since $s_\varphi = s_\psi$ we can apply Lemma \ref{lemma:estimates_for_the_difference_and_discussion_for_alpha} $(i)$ to derive 
    \begin{align*}
        \int_{-h}^{h} |\varphi_2 - \psi_2|^2 \, ds \leq C_K\eps(I_\eps(\varphi) + I_\eps(\psi)).
    \end{align*}
    The second term is estimated in a similar spirit:
    \begin{align*}
        \int_{-1/2}^{1/2} \min|\partial_2 \tilde w \pm a|^2 \, dx  & = \int_{-1/2}^{1/2} \min|\tilde\rho(x_1) \varphi_2(x_2) + (1-\tilde\rho(x_1)) \psi_2(x_2) \pm a|^2 \, dx \\
        &\leq C\int_{-1/2}^{1/2} \min|\psi_2 \pm a|^2 + |\psi_2 - \varphi_2|^2 \, dx \\
        &\leq C_K\eps(I_\eps(\varphi) + I_\eps(\psi)). \\
    \end{align*}
    Therefore, we observe
    \begin{align*}
        \int_{-\sigma}^\sigma \int_{-1/2}^{1/2} \min|\partial_2 \tilde w \pm a|^2 \, dx_2 \, dx_1 \leq C_K\sigma \eps (I_\eps(\varphi) + I_\eps(\psi)),
    \end{align*}
    and, consequently,
    \begin{align}\label{eq:trace_the_same_midpoint_glueing_1}
        \frac{1}{\eps}\int_{(-\sigma, \sigma)_*} W(\nabla \tilde w) \, dx \leq C_K\left(\frac{h}{\sigma} + \sigma \right)(I_\eps(\varphi) + I_\eps(\psi))
    \end{align}
    Now, we compute the second derivatives and get 
    \[
        \partial_{11} \tilde w(x_1, x_2) =   \tilde \rho''(x_1)\left(\int_{-1/2}^{x_2} \varphi_2(s) - \psi_2(s) \, ds \right)
    \]
    \[
        \partial_{12} \tilde w(x_1, x_2) =   \tilde \rho'(x_1)(\varphi_2(x_2) - \psi_2(x_2))
    \]
    and
    \[
        \partial_{22} \tilde w(x_1, x_2) = \tilde\rho(x_1) \varphi_2'(x_2) + (1-\tilde\rho(x_1)) \psi_2'(x_2).
    \]
    Similar to \eqref{eq:poincare_application} we have
    \begin{align*}
        \eps\int_{(-\sigma, \sigma)_*} |\partial_{11} \tilde w(x_1, x_2)|^2 \, dx &\leq \frac{C\eps h}{\sigma^3} \int_{-1/2}^{1/2}|\varphi_2 - \psi_2|^2 \, ds
    \end{align*}
    and, therefore, we infer 
    \[
        \eps\int_{(-\sigma, \sigma)_*} |\partial_{11} \tilde w(x_1, x_2)|^2 \, dx \leq C_K \frac{\eps^2 h}{\sigma^3}(I_\eps(\varphi) + I_\eps(\psi)).
    \]
    Moreover,
    \begin{align*}
        \eps\int_{(-\sigma, \sigma)_*} |\partial_{12} \tilde w(x_1, x_2)|^2 \, dx \leq \frac{C\eps}{\sigma} \int_{-h}^h |\varphi_2 - \psi_2|^2 \, ds \leq C_K \frac{\eps^2}{\sigma}(I_\eps(\varphi) + I_\eps(\psi))
    \end{align*}
    and
    \begin{align*}
        \eps\int_{(-\sigma, \sigma)_*} |\partial_{22} \tilde w(x_1, x_2)|^2 \, dx \leq C\sigma \eps \left( \int_{-h}^h |\varphi_2'|^2 \, ds +  \int_{-h}^h |\psi_2'|^2 \, ds \right) \leq \sigma (I_\eps(\varphi) + I_\eps(\psi)).
    \end{align*}
    By assumption, we have $\sigma > \eps$ and $h > \eps$, so the inequalities for the second derivatives can be simplified to
    \[
        \eps\int_{(-\sigma, \sigma)_*} |\nabla^2 \tilde w|^2 \, dx \leq C\left( \frac{h}{\sigma} + \sigma \right)(I_\eps(\varphi) + I_\eps(\psi)).
    \]
    Considering this with \eqref{eq:trace_the_same_midpoint_glueing_1} concludes the proof.  
\end{proof}

In the next lemma, we will combine the two previous results to derive a transitional map between any two suitable traces, provided their phase-separating points are not too far apart.
\begin{lemma}\label{lemma:trace_glueing_if_midpoints_close}
    Let $\varphi, \psi \in W^{1,1}_{\rm{loc}}(\R; \R^{2 \times 2})$ and $K > 0$ with  
    \[
        I_{\eps}(\varphi), I_{\eps}(\psi) < K < 3d_W(A,B).
    \]
    Suppose that
    \begin{itemize}
        \item there exists $h \in (\eps, 1)$ such that $\varphi(s) = \psi(s) = \pm A$ for $s \geq h$,
        \item there exists $\tilde h \in (0,1)$ with $h > \tilde h \sqrt \eps > 0$ such that $|s_\varphi - s_\psi| < \tilde h \sqrt \eps < 1$.
    \end{itemize} 
    Then, for each $\sigma > 0$ there exists a map $\tilde z \in H^2((-\sigma, \sigma)_*; \R^2) $ such that 
    \begin{enumerate}
        \item $\partial_2 \tilde z(x_1, .) = \varphi_2$ near $x_1 = -\sigma$,
        \item $\partial_2 \tilde z(x_1, .) = \psi_2$ near $x_1 = \sigma$,
        \item $\partial_1 \tilde z$ = 0 near $x_1 = \pm \sigma$,
        \item we have the energy estimate
        \[
            E_{\eps}(\tilde z, (-\sigma, \sigma)_*) \leq C(\sigma + C_\sigma(h + \tilde h))(1 + I_\eps(\varphi) +     I_\eps(\psi)).
        \]
    \end{enumerate}
\end{lemma}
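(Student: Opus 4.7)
The plan is to build $\tilde z$ in two stages by combining Lemmas \ref{lemma:trace_translation_glueing} and \ref{lemma:trace_the_same_midpoint_glueing}. First, vertically translate $\varphi$ by $\beta := s_\varphi - s_\psi$ (so $|\beta| < \tilde h\sqrt\eps$) in order to align its phase-separating point with that of $\psi$. Second, glue the translated curve $\tilde\varphi := \varphi(\cdot+\beta)$ to $\psi$ using the equal-midpoint result. Accordingly, I split $(-\sigma,\sigma)_*$ into the two halves $(-\sigma, 0)_*$ and $(0, \sigma)_*$ and handle each with one of the lemmas applied on a cylinder of half-width $\sigma/2$ (after an $x_1$-translation).

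On $(-\sigma, 0)_*$, Lemma \ref{lemma:trace_translation_glueing} with shift $\beta$ produces $\tilde v$ with $\partial_2 \tilde v = \varphi_2$ near $x_1 = -\sigma$, $\partial_2 \tilde v = \varphi_2(\cdot + \beta) = \tilde\varphi_2$ near $x_1 = 0$, $\partial_1 \tilde v = 0$ near both edges, and
\[
    E_\eps(\tilde v) \leq C\bigl(\sigma + C_\sigma \beta^2/\eps\bigr)\bigl(1 + I_\eps(\varphi)\bigr) \leq C\bigl(\sigma + C_\sigma \tilde h\bigr)\bigl(1 + I_\eps(\varphi)\bigr),
\]
where the last step uses $\beta^2/\eps < \tilde h^2 \leq \tilde h$. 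On $(0, \sigma)_*$, I would verify that $\tilde\varphi$ and $\psi$ fit the hypotheses of Lemma \ref{lemma:trace_the_same_midpoint_glueing}: translation-invariance of $I_\eps$ gives $I_\eps(\tilde\varphi) = I_\eps(\varphi) < K$; a direct unwinding of Definition \ref{def:admissable_curves} yields $s_{\tilde\varphi}^{\alpha_K} = s_\varphi^{\alpha_K} - \beta = s_\psi$; and $\varphi = \pm A$ on $\{\pm s \geq h\}$ becomes $\tilde\varphi = \pm A$ on $\{\pm s \geq h + |\beta|\}$, with $h' := h + |\beta| < 2h$ because $h > \tilde h \sqrt\eps > |\beta|$. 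The lemma then produces $\tilde w$ on $(0, \sigma)_*$ whose traces are $\tilde\varphi_2$ at $x_1 = 0$ and $\psi_2$ at $x_1 = \sigma$, with $\partial_1 \tilde w = 0$ at the boundary and energy
\[
    E_\eps(\tilde w) \leq C_K\bigl(2h'/\sigma + \sigma\bigr)\bigl(I_\eps(\varphi) + I_\eps(\psi)\bigr) \leq C_K\bigl(C_\sigma h + \sigma\bigr)\bigl(I_\eps(\varphi) + I_\eps(\psi)\bigr).
\]

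The last step is to glue $\tilde v$ and $\tilde w$ across $\{x_1 = 0\}$. Both maps have $\partial_1 = 0$ in a neighbourhood of that line, hence depend only on $x_2$ there, and their $\partial_2$-values agree since $\partial_2 \tilde v(0, \cdot) = \tilde\varphi_2 = \partial_2 \tilde w(0, \cdot)$. After adjusting $\tilde w$ by an additive constant so that the values match on $\{x_1 = 0\}$, the concatenation defines $\tilde z \in H^2((-\sigma, \sigma)_*; \R^2)$ and properties (1)--(3) hold by construction. Summing the two energy bounds produces the required inequality. The one subtlety worth emphasising is the joint use of the hypothesis $|\beta| < \tilde h\sqrt\eps$: it converts the translation cost $\beta^2/\eps$ into a factor of $\tilde h$ while simultaneously keeping $h' = h + |\beta|$ comparable to $h$, so that both contributions combine into the clean bound $C(\sigma + C_\sigma(h + \tilde h))(1 + I_\eps(\varphi) + I_\eps(\psi))$.
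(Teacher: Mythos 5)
Your proposal is correct and follows the paper's own two-step decomposition: translate with Lemma \ref{lemma:trace_translation_glueing} on $(-\sigma,0)_*$, glue with Lemma \ref{lemma:trace_the_same_midpoint_glueing} on $(0,\sigma)_*$, then concatenate across $\{x_1=0\}$ using the vanishing of $\partial_1$ and matching $\partial_2$-traces. As a minor aside, you have the sign of the shift correct ($\beta = s_\varphi - s_\psi$, giving $s_{\tilde\varphi} = s_\varphi - \beta = s_\psi$), whereas the paper's $\zeta(s) = \varphi(s - s_\varphi + s_\psi)$ would yield $s_\zeta = 2s_\varphi - s_\psi$ rather than $s_\psi$; this is an inconsequential typo since the estimates depend only on $|\beta|$.
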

\begin{proof}
    Consider the curves 
    \[
        \varphi(s), \, \psi(s), \, \zeta(s) := \varphi(s - s_\varphi + s_\psi)
    \]
    for $s \in \R$. First, apply Lemma \ref{lemma:trace_translation_glueing} to $\varphi_2(s)$ and $\beta = - s_\varphi + s_\psi$ to derive a map $\tilde v \in H^2((-\sigma, 0)_*, \R^2)$ (after rescaling and translating to $(-\sigma, 0)$) such that 
    \begin{enumerate}[label = (\roman*)]
        \item $\partial_2 \tilde v(x_1, x_2) = \varphi_2(x_2)$ near $x_1 = -\sigma$ and for all $x_2$,
        \item $\partial_2 \tilde v(x_1, x_2) = \zeta_2(x_2)$ near $x_1 = 0$ and for all $x_2$,
        \item $\partial_1 \tilde v = 0$ near $x_1 = -\sigma$ and $x_1 = 0$,
        \item  and we have
        \[
            E_{\eps}(\tilde v) \leq C \left( \sigma + C_\sigma \frac{\beta^2}{\eps} \right) I_\eps(\varphi) \leq C \left( \sigma + C_\sigma \tilde h \right)(1 + I_\eps(\varphi)).
        \]
    \end{enumerate}
    Now, we observe that $s_\zeta = s_\psi$ by construction. Furthermore, observe that $\zeta(s) = \psi(s) = \pm A$ holds for $\pm s \geq \pm 2h \geq \pm (h + |\beta|)$. We can therefore apply Lemma \ref{lemma:trace_the_same_midpoint_glueing} to $\zeta$ and $\psi$ (instead of $h$ we use $2h$ \red{discuss}) to find a map $\tilde w \in H^2((0, \sigma)_*, \R^2)$ (after rescaling and translating to $(0, \sigma)$) such that 
    \begin{enumerate}
        \item $\partial_2 \tilde w(x_1, x_2) = \zeta_2(x_2)$ near $x_1 = 0$ and for all $x_2 $,
        \item $\partial_2 \tilde w(x_1, x_2) = \psi_2(x_2)$ near $x_1 = \sigma$ and for all $x_2$,
        \item $\partial_1 \tilde w$ = 0 near $x_1 = 0$ and $x_1 = \sigma$,
        \item and we have
        \[
            E_\eps(\tilde w) \leq C\left(\frac{h}{\sigma} + \sigma\right) (I_\eps(\varphi) + I_\eps(\psi)).
        \]
    \end{enumerate}
    Since the derivatives of $\tilde v$ and $\tilde w$ agree in a small neighborhood around $x_1 = 0$, we can, after adding a constant to $\tilde w$, conclude that 
    \[
        \tilde z := \begin{cases}
            \tilde v, & \text{in } (-\sigma, 0)_* \\
            \tilde w, & \text{in } (0, \sigma)_*
        \end{cases}
    \]
    belongs to $H^2((-\sigma, \sigma)_*; \R^2)$ and satisfies the appropriate trace conditions and energy estimates.
\end{proof}
We now turn to an estimate that enables the application of the glueing procedure described above. It essentially states that, under fixed affine boundary conditions, the phase-separating point of the derivative of a curve cannot deviate significantly, provided that its energy is bounded. We remark here that in the next theorem $\varphi, \psi$ are curves valued in $\R^2$ instead of $\R^{2\times2}$ which was the setting of the previous lemmas.
\begin{lemma}\label{lemma:midpoint_estimate}
    Let $\varphi, \psi \in H^2_{\rm{loc}}(\R; \R^{2})$, $\gamma_\varphi, \gamma_\psi \in H^1_{\rm{loc}}(\R; \R^{2}) $ and $K > 0$. We set 
    \[
        \zeta_\varphi := (\gamma_\varphi, \varphi') \text{ and } \zeta_\psi := (\gamma_\psi, \psi').
    \]
    
    Suppose that 
    \[
        I_\eps(\zeta_\varphi), I_\eps(\zeta_\psi) < K < 3d_W(A,B).
    \]
    Furthermore, suppose that there exists $h>\eps >0 $ and $c_+, c_- \in \R^2$ such that $\varphi(s) = \psi(s) = \pm a s + c_\pm$ for $\pm s \geq \pm h$. Then, the distance of the phase-separating points $s_{\zeta_\varphi} = s_{\zeta_\varphi}^{\alpha_K}$ and $s_{\zeta_\psi} = s_{\zeta_\psi}^{\alpha_K}$ (cf.\ Definition \ref{def:admissable_curves} and Lemma \ref{lemma:estimates_for_the_difference_and_discussion_for_alpha} $(ii)$) satisfies:
    \[
        |s_{\zeta_\varphi} - s_{\zeta_\psi}| \leq C_K\sqrt{h}\sqrt{\eps}\sqrt{1 + I_\eps(\zeta_\varphi) + I_\eps(\zeta_\psi)}.
    \]
\end{lemma}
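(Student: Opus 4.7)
The starting point is the identity
\[
    \int_{-h}^h (\varphi'-\psi')(s)\,ds = (\varphi-\psi)(h)-(\varphi-\psi)(-h)=0,
\]
which holds because $\varphi$ and $\psi$ coincide outside $(-h,h)$ by hypothesis. For an \emph{idealised} jump profile one would have $\int_{-h}^{h}\varphi'\,ds=-2a\,s_{\zeta_\varphi}$ (and likewise for $\psi$), so this identity would force $s_{\zeta_\varphi}=s_{\zeta_\psi}$. The task is to quantify the idealisation error as $O_K(\sqrt{h\eps(1+I_\eps(\zeta_\bullet))})$, whence $|s_{\zeta_\varphi}-s_{\zeta_\psi}|$ inherits this bound.

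\textbf{Main steps.} Admissibility of $(\zeta_\varphi,\alpha_K)$ and $(\zeta_\psi,\alpha_K)$, and hence well-definedness of the phase-separating points, follows from Lemma \ref{lemma:estimates_for_the_difference_and_discussion_for_alpha}(ii). Setting $\beta_\varphi^-:=\sup\zeta_\varphi^{-1}(B_{\alpha_K}(B))$ and $\beta_\varphi^+:=\inf\zeta_\varphi^{-1}(B_{\alpha_K}(A))$, assumption \ref{assump:quadratic_growth_around_wells} produces a uniform lower bound $W\geq c_{\alpha_K}>0$ off $B_{\alpha_K}(A)\cup B_{\alpha_K}(B)$, so the energy budget forces $\beta_\varphi^+-\beta_\varphi^-\leq C_K\eps$; combined with a Chebyshev argument that exploits $\varphi'\equiv\pm a$ outside $(-h,h)$ and the bound $\int|\gamma_\varphi|^2\leq C\eps I_\eps(\zeta_\varphi)$ coming from \eqref{assump:quadratic_from_below_first_variable}, one places $s_{\zeta_\varphi},s_{\zeta_\psi}$ within $C_K\eps$ of $[-h,h]$. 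Introduce the error quantity
\[
    E_\varphi:=\int_{-h}^{s_{\zeta_\varphi}}(\varphi'+a)\,ds+\int_{s_{\zeta_\varphi}}^{h}(\varphi'-a)\,ds=\int_{-h}^{h}\varphi'\,ds+2a\,s_{\zeta_\varphi},
\]
and $E_\psi$ analogously, so that the telescoping identity rewrites as $2a(s_{\zeta_\varphi}-s_{\zeta_\psi})=E_\varphi-E_\psi$. On the \emph{good} portion of $[-h,h]$ (where $\zeta_\varphi\in B_{\alpha_K}(A)\cup B_{\alpha_K}(B)$), the growth from below of \ref{assump:quadratic_growth_around_wells} gives $|\varphi'\mp a|^{2}\leq CW(\zeta_\varphi)$, and Cauchy--Schwarz against a region of length at most $2h+C_K\eps$ contributes $C\sqrt{h\,\eps\, I_\eps(\zeta_\varphi)}$. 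On the transitional part, of measure $\leq C_K\eps$, a further Cauchy--Schwarz using $\int\min|\varphi'\pm a|^2\leq C\eps I_\eps(\zeta_\varphi)$ produces a term of order $C_K\eps\leq C_K\sqrt{h\eps}$ by the hypothesis $\eps<h$. Summing, $|E_\varphi|,|E_\psi|\leq C_K\sqrt{h\eps(1+I_\eps(\zeta_\bullet))}$, and dividing by $|2a|$ yields the claimed estimate.

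\textbf{Expected obstacle.} The delicate bookkeeping lies in the edge cases $s_{\zeta_\bullet}\notin[-h,h]$, possible by at most $C_K\eps$: one integration sub-interval then reverses orientation, but the integrand vanishes on $[\pm h,\pm h+C_K\eps]$ because $\varphi'\equiv\pm a$ there, so no extra error appears once signs are tracked carefully. A secondary point is obtaining the sharp $\sqrt{h}$ factor in Cauchy--Schwarz: the good set must be confined essentially to $[-h,h]$, which is precisely the role of the Chebyshev localisation above; without it one would only get $\sqrt{h+ C_K\eps}$, which is still acceptable but requires a cleanup step.
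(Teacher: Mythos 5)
Your proof is essentially correct and takes a genuinely different decomposition than the paper. The paper works with the joint \emph{phase-overlap} set
\[
    \Lambda := \zeta_\varphi^{-1}(B_{\alpha_K}(A)) \cap \zeta_\psi^{-1}(B_{\alpha_K}(B)),
\]
assumes WLOG $s_{\zeta_\varphi} < s_{\zeta_\psi}$, shows $(s_{\zeta_\varphi}, s_{\zeta_\psi}) \subset \Lambda \cup F$ (with $|F|\lesssim_K\eps$), lower-bounds $|\Lambda|$ by $\big|\int_\Lambda(\varphi'-\psi')\big|$ using that $\varphi'-\psi'\in B_{2\alpha_K}(2a)$ on $\Lambda$, and then passes to $\Lambda^c$ via $\int_\mathbb{R}(\varphi'-\psi')=0$ before applying H\"older. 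Your route replaces that joint analysis with the per-curve error quantities $E_\varphi$, $E_\psi$, so that $2a(s_{\zeta_\varphi}-s_{\zeta_\psi}) = E_\varphi - E_\psi$ comes out directly from the algebra (no WLOG ordering, no pointwise lower bound of $|\varphi'-\psi'|$ over $\Lambda$ needed), and each $|E_\bullet|$ is then estimated independently by a good/bad split and Cauchy--Schwarz. Both proofs rest on the same two pillars --- the vanishing of $\int(\varphi'-\psi')$ thanks to the shared boundary data, and the $\sqrt{h\eps I_\eps}$ scaling from H\"older on $(-h,h)$ --- but your bookkeeping is more modular. Two minor inaccuracies worth fixing: (1) in the ``Expected obstacle'' remark, when, say, $s_{\zeta_\varphi}>h$, the second term $\int_{s_{\zeta_\varphi}}^h(\varphi'-a)$ indeed vanishes, but the first term acquires the extension $\int_h^{s_{\zeta_\varphi}}(\varphi'+a)=2a(s_{\zeta_\varphi}-h)$, which does \emph{not} vanish; it is simply of size $O_K(\eps)\le O_K(\sqrt{h\eps})$, so the conclusion stands, but the claim ``no extra error appears'' is wrong as stated. (2) On the transitional set (where $\zeta_\varphi$ is far from both wells) the quantity you need, $|\varphi'+a|^2$ (resp. $|\varphi'-a|^2$), is the \emph{max}, not the $\min$, of $|\varphi'\mp a|^2$; the bound by $CW(\zeta_\varphi)$ there comes from \eqref{assump:inverse_quadratic_condition}, not from \ref{assump:quadratic_growth_around_wells} alone, exactly as in the paper's treatment of $\Lambda^c$.
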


\begin{proof}
    Without restriction, we assume $s_\varphi < s_\psi$.  Consider the region of phase overlaps
    \[
        \Landau := \zeta_\varphi^{-1}(B_{\alpha_K}(A)) \cap \zeta_\psi^{-1}(B_{\alpha_K}(B)).
    \]
    We will first show that  
    \begin{align}\label{eq:difference_of_midpoint_estimate_1}
        |s_{\zeta_\varphi} - s_{\zeta_\psi}| \leq C_K(|\Landau| + \eps)
    \end{align}
    holds for small $\eps > 0$. Observe that 
    \[
        (s_{\zeta_\varphi}, s_{\zeta_\psi}) \subset (\Landau \cup F)
    \]
    with 
    \[
        F = \{ s \in \R : |\zeta_\varphi(s) \pm A| > \alpha_K \text{ or } |\zeta_\psi(s) \pm A| > \alpha_K  \}.
    \]
    Since $I_\eps(\zeta_\varphi) + I_\eps(\zeta_\psi) < 2K$ we have 
    \[
        |F| \leq \frac{C}{\alpha_K^2}\int_\R W(\zeta_\psi) + W(\zeta_\varphi) \, ds \leq C\frac{K\eps}{\alpha_K^2}
    \]
    from which \eqref{eq:difference_of_midpoint_estimate_1} follows.
    Now, we observe that for any $s \in \Landau$ we have $\varphi'(s), -\psi(s) \in B_{\alpha_K}(a)$, and, consequently, we infer
    \[
        \varphi'(s) - \psi'(s) \in B_{2\alpha_K}(2a)
    \]
    By the definition of $\alpha_K$ we have $\alpha_K < |a|/8$ (cf.\ \eqref{eq:definition_alpha_K}). Therefore, we can derive
    \begin{align}\label{eq:difference_of_midpoint_estimate_2}
        \frac 1 2 |a| |\Landau| & \leq \left|\int_\Landau \varphi'(s) - \psi'(s)  \, ds \right| .
    \end{align}
    We can further use that $\varphi(s) - \psi(s) = 0$ for $\pm s \geq \pm h$ paired with the fundamental theorem of calculus to infer
    \begin{align*}
        \left|\int_\Landau \varphi'(s) - \psi'(s)  \, ds \right| & = \left|- \int_{\Landau^c} \varphi'(s) - \psi'(s) \, ds \right| \\
        & \leq \int_{\Landau^c \cap {\zeta_\varphi}^{-1}(B_{\alpha_K}(A))} \left| \varphi'(s) - \psi'(s) \right| \, ds + \int_{\Landau^c \cap ({\zeta_\varphi}^{-1}(B_{\alpha_K}(A)))^c} \left|\varphi'(s) - \psi'(s)\right| \, ds.
    \end{align*}
    We estimate only the first term since the second one works analogously. We get
    \begin{align*}
        \int_{\Landau^c \cap {\zeta_\varphi}^{-1}(B_{\alpha_K}(A))} \left| \varphi'(s) - \psi'(s) \right| \, ds &= \int_{\Landau^c \cap {\zeta_\varphi}^{-1}(B_{\alpha_K}(A)) \cap (-h, h)} \left| \varphi'(s) - \psi'(s) \right| \, ds \\
        &\leq \int_{\Landau^c \cap {\zeta_\varphi}^{-1}(B_{\alpha_K}(A)) \cap (-h, h)} \left| \varphi'(s) - a| + |a- \psi'(s) \right| \, ds \\
        &\leq C\sqrt{h} \left( \int_{\Landau^c \cap {\zeta_\varphi}^{-1}(B_{\alpha_K}(A)) \cap (-h, h)} | \varphi'(s) - a|^2 + |a- \psi'(s)|^2 \, ds \right)^{1/2},
    \end{align*}
    where the last inequality is an application of the Hölder's inequality. Observe that $s \in \Landau^c \cap {\zeta_\varphi}^{-1}(B_{\alpha_K}(A))$ implies that either $s \in {\zeta_\psi}^{-1}(B_{\alpha_K}(B))$ or $s \in {\zeta_\psi}^{-1}(B_{\alpha_K}(\{ A, B \})^c)$. In particular, we can apply \eqref{assump:inverse_quadratic_condition} and \ref{assump:quadratic_growth_around_wells} to observe
    \begin{align*}
        &\int_{\Landau^c \cap {\zeta_\varphi}^{-1}(B_{\alpha_K}(A)) \cap (-h, h)} | \varphi'(s) - a|^2 + |a- \psi'(s)|^2 \, ds \\
        \leq &\, \int_{\Landau^c \cap {\zeta_\varphi}^{-1}(B_{\alpha_K}(A)) \cap (-h, h)} | \zeta_\varphi(s) - A|^2 + |A- \zeta_\psi(s)|^2 \, ds \\
        \leq &\, C_K \int_{\Landau^c \cap {\zeta_\varphi}^{-1}(B_{\alpha_K}(A)) \cap (-h, h)} W(\zeta_\varphi) + W(\zeta_\psi) \, ds.
    \end{align*}
    Therefore, we have
    \[
        \int_{\Landau^c \cap {\zeta_\varphi}^{-1}(B_{\alpha_K}(A))} \left| \varphi'(s) - \psi'(s) \right| \, ds \leq C_K\sqrt{h}\sqrt{\eps}\sqrt{I_\eps(\zeta_\varphi) + I_\eps(\zeta_\psi)}.
    \]
    With combining the last estimate with \eqref{eq:difference_of_midpoint_estimate_1}, \eqref{eq:difference_of_midpoint_estimate_2} and the assumption $\eps < h$ we can now estimate 
    \[
        |s_{\zeta_\varphi} - s_{\zeta_\psi}| \leq C_K(|\Landau| + \eps) \leq C_K \left(\sqrt{h}\sqrt{\eps}\sqrt{I_\eps(\zeta_\varphi) + I_\eps(\zeta_\psi)} + \eps\right) \leq C_K\sqrt{h}\sqrt{\eps}\sqrt{1 + I_\eps(\zeta_\varphi) + I_\eps(\zeta_\psi)}.
    \]
    This concludes the proof.
\end{proof}

\subsection{The proof of $K^*_\rm{per} \leq K^*$}\label{sec:inequality_proof}

Now, we will use the observations from the previous subsections to derive the main result of this paper. We modify a deformation $u$ in such a way at the boundary, that the traces do not intersect phase bubbles by an application of Section \ref{sec:horizontal_modification} and then use the maps from the previous section to interpolate between two translated versions of $u$. Figure \ref{fig:interpolation_graphics} gives a visual summary of the interpolation used in the proof of the next theorem.
\begin{figure}
    \centering
    \includegraphics[width=0.6\linewidth]{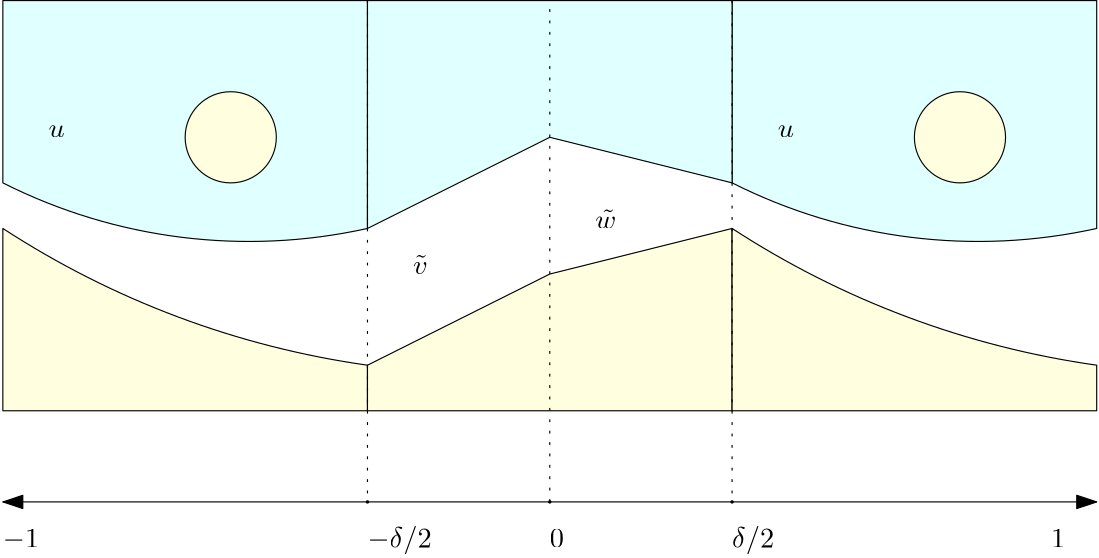}
    \caption{A visualisation of the glueing procedure used in the proof of Theorem \ref{thm:horizontal_modification}. The map $\tilde v$ and $\tilde w$ are given by Lemma \ref{lemma:trace_translation_glueing} and Lemma \ref{lemma:trace_the_same_midpoint_glueing}.}
    \label{fig:interpolation_graphics}
\end{figure}
\begin{theorem}\label{thm:horizontal_modification}
    Suppose that $$K^* < 3d_W(A,B)$$ where $K^*$ is the optimal profile constant (cf.\ Definition \ref{def:opt_prof_constant_and_opt_prof}) and $d_W$ is the geodesic distance from $A, B$ in the sense of \eqref{def:geodesic_distance}. Let $(u_n, \eps_n) \in H^2(Q, \R^2) \times (0,1)$ be an optimal profile sequence with respect to $(-1/2,1/2)$. Then, there exists an optimal profile sequence $(z_n, \eps_n) \in H^2((-1,1)_*, \R^2) \times (0,1)$ with respect to $(-1,1)$ such that the restriction of $\nabla z_n$ to $(-1/2, 1/2)$ is periodic in $x_1$, and we have 
    \[
        K^*_\rm{per} \leq K^*
    \]
    where 
    \begin{align*}
        K^*_\rm{per} = \inf\Bigg\{ &\int_Q L W(\nabla u) + \frac{1}{L}|\nabla^2 u|^2 \, dx : \, L > 0, \,  u \in H^2(Q; \R^2), \, \nabla u \text{ is 1-periodic in } x_1, \\ 
        & \nabla u(x) = \nabla u_0(x) \text{ for $x_1 \in (-1/2, 1/2),\, |x_2| \in (1/4, 1/2)$} \Bigg \}.
    \end{align*}
\end{theorem}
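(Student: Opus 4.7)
The plan is to construct, from the given optimal profile sequence $(u_n, \eps_n)$ on $Q$, a $1$-periodic (in $x_1$) modification $\tilde z_n$ on $Q$ whose energy tends to $K^*$; extending $\tilde z_n$ by $1$-periodicity of the gradient to $(-1,1)_*$ will then yield the desired $z_n$, and using $\tilde z_n = z_n|_Q$ as a competitor for $K^*_\rm{per}$ (with $L = 1/\eps_n$) will give $K^*_\rm{per} \leq K^*$. The construction combines the boundary modifications of Subsection \ref{sec:horizontal_modification} with the gluing machinery of Subsection \ref{sec:optimal_interpolation_of_traces}.

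First, I would fix $\tau \in (1,2)$ close enough to $1$ that $\tau K^* < 3 d_W(A, B)$ (possible by hypothesis), together with null sequences $\delta_n, \sigma_n, h_n \to 0^+$ whose mutual scaling will be chosen last. Applying Proposition \ref{theorem:vertical_boundary_condition} with $h = 3/8$ replaces $u_n$ by an optimal profile sequence that is affine with gradient $\pm A$ on $|x_2| \geq 1/4$, so that the $K^*_\rm{per}$ boundary identity $\nabla u = \nabla u_0$ on $|x_2| \in (1/4, 1/2)$ is automatically fulfilled. Then Lemma \ref{lemma:horizontal_boundary_condition}, applied with parameters $\delta_n$ and $\tau$, produces $w_n$ on $(-1/2 - \delta_n, 1/2 + \delta_n)_*$ which is constant in $x_1$ near its two lateral faces with slice values, and such that the two trace curves
\[
\varphi_n := \nabla u_n(s_n^1,\cdot), \qquad \psi_n := \nabla u_n(s_n^2,\cdot),
\]
satisfy $I_{\eps_n}(\varphi_n), I_{\eps_n}(\psi_n) < \tau K^* < 3d_W(A,B)$ and coincide with $\pm A$ exactly on $|x_2| > 1/4$; by Lemma \ref{lemma:estimates_for_the_difference_and_discussion_for_alpha}(ii), both $(\varphi_n, \alpha_{\tau K^*})$ and $(\psi_n, \alpha_{\tau K^*})$ are admissible in the sense of Definition \ref{def:admissable_curves}.

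Next, I apply Lemma \ref{lemma:midpoint_estimate} with affine-region parameter $h = 1/4$ inherited from Step 1, yielding $|s_{\varphi_n} - s_{\psi_n}| \leq C\sqrt{\eps_n}$, so that Lemma \ref{lemma:trace_glueing_if_midpoints_close}, applied with the chosen $\sigma_n$ and $h_n$ (tuned so that $\eps_n \ll h_n \ll \sigma_n$ and the right-hand side of its energy bound is $o(1)$), returns a transitional map on $(-\sigma_n, \sigma_n)_*$ that bridges $\varphi_n$ and $\psi_n$ with $\partial_1 = 0$ near its two lateral faces. I then glue this transitional map into $w_n$ near the right boundary $\{x_1 = 1/2\}$, in the region $(1/2 - 2\sigma_n, 1/2) \times (-1/2, 1/2)$ where $w_n$ is already constant in $x_1$ with value $u_n(s_n^2, \cdot)$. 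The resulting map $\tilde z_n \in H^2(Q, \R^2)$ has $\nabla \tilde z_n$ $1$-periodic in $x_1$ (its left and right traces now both equal $\varphi_n$) and satisfies
\[
E_{\eps_n}(\tilde z_n, Q) \leq K^*(1 + C_\tau \delta_n) + o(1) \longrightarrow K^*.
\]
Extending $\tilde z_n$ by 1-periodicity of the gradient to $(-1, 1)_*$ gives $(z_n, \eps_n)$, an optimal profile sequence with respect to $(-1, 1)$ since $E_{\eps_n}(z_n, (-1,1)_*) \to 2K^* = \F((-1,1))$, and $\tilde z_n$ is admissible for $K^*_\rm{per}$, yielding $K^*_\rm{per} \leq K^*$.

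The principal technical obstacle is twofold: first, the joint tuning of the scales $\delta_n, \sigma_n, h_n$ so that every error term from Lemmas \ref{lemma:horizontal_boundary_condition} and \ref{lemma:trace_glueing_if_midpoints_close} vanishes in the limit (the bounds involve an interplay between $\sigma_n$, $C_{\sigma_n}\tilde h_n$, and $h_n/\sigma_n$); second, the preservation of the $K^*_\rm{per}$ boundary identity across the transition strip. The formula for $\tilde z$ in Lemma \ref{lemma:trace_the_same_midpoint_glueing} produces a horizontal derivative $\partial_1 \tilde z(x_1, x_2) = \tilde\rho'(x_1)\int_{-1/2}^{x_2}(\varphi_2 - \psi_2)\, ds$ which, in the affine strips $|x_2|>1/4$, reduces to a nontrivial constant in $x_2$ times $\tilde\rho'(x_1)$ and need not vanish. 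This obstruction is removed by localising the interpolation to $|x_2| < 1/4$ and extending by the relevant affine function outside, exploiting that both $\varphi_n$ and $\psi_n$ coincide exactly with $\pm A$ on the affine region.
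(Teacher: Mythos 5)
Your overall architecture matches the paper's: vertical modification via Proposition \ref{theorem:vertical_boundary_condition}, horizontal modification via Lemma \ref{lemma:horizontal_boundary_condition}, the midpoint estimate of Lemma \ref{lemma:midpoint_estimate}, and a single gluing via Lemma \ref{lemma:trace_glueing_if_midpoints_close}, with the final extension to $(-1,1)_*$ by periodicity. However, there is a genuine gap in your choice of the vertical-modification scale. Applying Proposition \ref{theorem:vertical_boundary_condition} with $h=3/8$ (so that the traces are exactly affine for $|x_2|>1/4$) forces the parameter $h$ in Lemma \ref{lemma:trace_glueing_if_midpoints_close} to be at least $1/4$: that parameter is not a free tuning knob but is pinned by the smallest half-width of the region on which $\varphi_n(s)=\psi_n(s)=\pm A$. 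The resulting energy estimate reads $E_{\eps_n}(\tilde z_n)\leq C\bigl(\sigma_n+C_{\sigma_n}(h+\tilde h_n)\bigr)(1+I_{\eps_n}(\varphi_n)+I_{\eps_n}(\psi_n))$ with $h\geq 1/4$. Since $C_{\sigma_n}$ diverges as $\sigma_n\to 0$ while $\sigma_n$ itself must stay below $\delta_n$ to fit in the gluing strip, the term $C_{\sigma_n}h$ does not vanish for any admissible choice of $\sigma_n$; your description ``with the chosen $\sigma_n$ and $h_n$ (tuned so that $\eps_n\ll h_n\ll\sigma_n$\dots)'' silently treats $h$ as tunable, which is inconsistent with the fixed affine region from Step 1. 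The paper avoids exactly this by keeping the affine half-width $h$ as an extra vanishing scale and extracting a diagonal subsequence ``letting first $h$ tend to $0$ and then $\delta$.''

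Your proposed remedy for the $K^*_{\rm per}$ boundary identity is also not quite right as stated. You suggest restricting the interpolation to $|x_2|<1/4$ and extending affinely, but this presupposes $\partial_1\tilde z(x_1,\pm 1/4)=0$. For the piece $\tilde w$ this does hold, since $\partial_1\tilde w=\tilde\rho'(x_1)\int_{-1/2}^{x_2}(\varphi_2-\psi_2)\,ds$ and the vertical boundary conditions of Proposition \ref{theorem:vertical_boundary_condition} force $\int_{-h}^{h}(\varphi_2-\psi_2)\,ds=(c_n^+-c_n^-)-(c_n^+-c_n^-)=0$. But for the translation piece $\tilde v$ of Lemma \ref{lemma:trace_translation_glueing} one has $\partial_1\tilde v(x_1,x_2)=\varphi_2\bigl(x_2+\rho(x_1)\bigr)\rho'(x_1)$, which at $x_2=\pm 1/4$ equals $\pm a\,\rho'(x_1)\neq 0$ in the interior of the transition strip. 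So the affine extension is not $C^1$-compatible with $\tilde v$, and the splice is not $H^2$. Fixing this requires modifying the translation map itself (e.g., letting the translation $\rho$ depend on $x_2$ with a cutoff supported in $|x_2|<1/4$, or subtracting a quasi-affine correction), not merely cutting and pasting at $|x_2|=1/4$.
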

\begin{proof}
    Let $1/2 > \delta > 0$, $\tau \in (1,2)$, $h \in (0, 1)$ and $n$ be large enough such that
    \begin{itemize}
        \item $K^* \tau < 3d_W(A,B)$,
        \item  we have \begin{align}\label{eq:tilde_h_definition}
            C_{K^*\tau}\sqrt{h}\sqrt{1 + 2K^*\tau} =: \tilde h < 1,
        \end{align}
        where $C_{K^*\tau}$ is the constant given by Lemma \ref{lemma:midpoint_estimate}, and
        \item $\eps_n < h/\tilde h$.
    \end{itemize}
    We first modify $(u_n, \eps_n)$ with Proposition \ref{theorem:vertical_boundary_condition} such that (without renaming $u_n$) for $\pm x \geq \pm h$ we have 
    \[
        u_n(x) = \pm ax_2 + c^\pm_n
    \]
    for constants $c^\pm_n \in \R^2$ with $c^\pm_n \to 0$. We apply  Lemma \ref{lemma:horizontal_boundary_condition} with our fixed $\tau$ to find a sequence of vector fields $w_n \in H^2((-1/2 - \delta, 1/2 + \delta)_*, \R^2)$, as well as scalars $s_n^+ \in (1/2 - 2\delta, 1/2 - \delta)$ and $s_n^- \in (-1/2 + 2\delta, -1/2 + \delta)$ such that for large $n \in \N$ the following holds true:
    \begin{itemize}
        \item By our choice of $\tau$ we have $K_\tau := K^* \tau < 3d_W(A,B)$. So property \eqref{eq:trace_energy_condition} in Lemma \ref{lemma:horizontal_boundary_condition} turns into
        \begin{align}\label{eq:trace_relation_to_geodesic}
            \int_{-\frac 1 2}^{\frac 1 2} \frac 1 {\eps_n} W((\nabla u_n)(s, x_2)) + \eps_n  |\nabla^2 u_n(s, x_2)|^2 \, dx_2 < K_\tau < 3d_W(A,B)
        \end{align}
        for $s \in \{s_n^-, s_n^+\}$.
        We remark that \eqref{eq:trace_relation_to_geodesic} is a technical necessity since our trace energy needs to be uniformly bounded away from $3d_W(A,B)$ to apply the results from this section.
        \item $w_n$ admits the trace values of $u_n$ at the boundary: 
        \begin{itemize}
            \item $w_n(x_1, x_2) = u_n(s_n^-, x_2) $ for all $(x_1, x_2) \in (-1/2 - \delta, -1/2 + \delta)_*$,
            \item $w_n(x_1, x_2) = u_n(s_n^+, x_2) $ for all $(x_1, x_2) \in (1/2 - \delta, 1/2 + \delta)_*$.
        \end{itemize}
        \item We have
        $$ w_n = u_n $$
        in $(-1/2 + 2\delta, 1/2 - 2\delta)$.
        \item The following estimate holds: 
        \begin{align}\label{eq:thm_estimate_2}
            E_{\eps_n}(w_n, (-1/2 - \delta, 1/2 + \delta) \times (-1/2, 1/2)) \leq K^*(1 + C\delta).
        \end{align}
    \end{itemize}
    Now, we first set $\tilde \varphi(s) := u_n(s^+_n, s)$ and $\tilde \psi(s) := u_n(s^-_n, s)$, $\gamma_{\tilde\varphi} = \partial_1 u_n(s^+_n, s)$ and $\gamma_{\tilde\psi} = \partial_1 u_n(s^-_n, s)$. Furthermore, we set $\zeta_{\tilde \varphi} = (\gamma_{\tilde \varphi}, \tilde \varphi') = \nabla u_n(s^+_n, s)$ and $\zeta_{\tilde \psi} = (\gamma_{\tilde \psi}, \tilde \psi') = \nabla u_n(s^-_n, s)$. We observe that \eqref{eq:trace_relation_to_geodesic} can be written as
    \[
        I_{\eps_n}(\zeta_{\tilde \varphi}), I_{\eps_n}(\zeta_{\tilde \psi}) \leq K_\tau \leq 3d_W(A,B).
    \]
    Therefore, by Lemma \ref{lemma:estimates_for_the_difference_and_discussion_for_alpha} $(ii)$ the phase separating points $s_{\zeta_{\tilde \varphi}} = s_{\zeta_{\tilde \varphi}}^{\alpha_K}$ and $s_{\zeta_{\tilde \psi}} = s_{\zeta_{\tilde \psi}}^{\alpha_K}$ are well defined. In particular, since we fixed suitable boundary conditions, we can apply Lemma \ref{lemma:midpoint_estimate} and derive
    \[
        |s_{\zeta_{\tilde \varphi} } - s_{\zeta_{\tilde \psi} }| \leq \tilde h\sqrt{\eps_n}.
    \]  
    Now that we have an estimate on the distance of the midpoints, we apply Lemma \ref{lemma:trace_glueing_if_midpoints_close} to $\varphi = \zeta_{\tilde \varphi}$ and $\psi =  \zeta_{\tilde \psi}$, $\tilde h$ as defined in \eqref{eq:tilde_h_definition}, and $\sigma = \delta/2$, to find a map $\tilde z_n \in H^2((-\delta/2, \delta/2)_*; \R^2)$  with
    \begin{align}\label{eq:thm_estimate_1}
        E_\eps(\tilde z_n, (-\delta/2, \delta/2)_*) \leq C_{K_\tau}(\delta + C_\delta\sqrt h),
    \end{align}
    and the property that for all $x_2 \in \R$
    \[
        F(x_1, x_2) := \begin{cases}
            \nabla w_n(x_1 + 1/2, x_2) & x_1 \in (-1, - \delta/2)\\
            \nabla \tilde z_n(x_1, x_2) & x_1 \in (-\delta/2, \delta/2) \\
            \nabla w_n(x_1 - 1/2, x_2) & x_1 \in (\delta/2, 1)
        \end{cases}
    \]
    is in $H^1((-1, 1)_*; \R^2)$ and $\curl$-free by construction. After adding constants $M^1_n$ and $M^2_n$ to $w_n(x_1 - 1/2, x_2)$ and $w_n(x_1 + 1/2, x_2)$ we know that 
    \[
        z_n(x_1, x_2) := \begin{cases}
            w_n(x_1 + 1/2, x_2) + M^1_n & x_1 \in (-1, - \delta/2)\\
            \tilde z_n(x_1, x_2) & x_1 \in (-\delta/2, \delta/2) \\
            w_n(x_1 - 1/2, x_2) + M^2_n & x_1 \in (\delta/2, 1)
        \end{cases}
    \]
    is in $H^2((-1, 1)_*; \R^2)$ with 
    \[
        \limsup_{n \to \infty} E_{\eps_n}(z_n, (-1, 1)_*) \leq \limsup_{n \to \infty} 2E_{\eps_n}(w_n, Q) + \limsup_{n \to \infty}E_{\eps_n}(\tilde z_n, (-\delta/2, \delta/2)_*)  \leq (2 + C\delta + C_\delta \sqrt{h}))K^*,
    \]
    where the latter estimate comes from \eqref{eq:thm_estimate_2} and \eqref{eq:thm_estimate_1}. Note that here the constants $C, C_\delta$ only depend on $K_\tau$. Now, choosing a diagonal sequence letting first $h$ tend to $0$ and then $\delta$, we get an optimal profile sequence $(z_n, \eps_n)$ (without renaming) with respect to $(-1,1)$. By the local optimality property Lemma \ref{lemma:local_optimality_of_optimal_profiles}, we also know that the restriction of $z_n$ to $Q$ is an optimal profile sequence $(z_n|_Q, \eps_n)$ with respect to $(-1/2, 1/2)$. By the construction of $z_n$, we also have the periodicity of $\nabla z_n$ in $x_1$ on $Q$ since 
    \[
        \nabla z_n(-1/2, x_2) = \nabla w_n(0, x_2) = \nabla z_n(1/2, x_2).
    \]
    Finally, we observe
    \[
        K^*_\rm{per} \leq \inf_{n \in \N} E_{\eps_n}(z_n, Q) \leq \limsup_{n \to \infty} E_{\eps_n}(z_n, Q) = K^* 
    \]
    which concludes the proof.
\end{proof}

\subsection{The geodesic distance bound}\label{sec:geodesic_distance_bound}

Here, we show that any continuous perturbation (in a certain sense) of the quadratic potential
\[
    W_0(M) = |m_1|^2 + \min |m_2 \pm a|^2
\]
fulfils the assumptions of Theorem \ref{theorem:main_result}.
\begin{theorem}\label{thm:quadratic_perturbations_fulfil_requirement}
    Suppose that $W$ is continuous and 
    \begin{align}\label{eq:nearly_quadratic}
        |\sqrt W - \sqrt{W_0}| \leq \sigma \sqrt W_0 
    \end{align}
    holds for $\sigma \in (0, 1/2)$. Then, \ref{assump:quadratic_growth_around_wells} and $K^* < 3d_W(A,B)$ hold and $W$ is a double well potential, i.e., $W(M) = 0$ if and only if $M \in \{A,B\}$.
\end{theorem}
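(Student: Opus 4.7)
Proof plan. The bound $|\sqrt{W} - \sqrt{W_0}| \leq \sigma \sqrt{W_0}$ rearranges to $(1-\sigma)\sqrt{W_0} \leq \sqrt{W} \leq (1+\sigma)\sqrt{W_0}$ and squaring yields $(1-\sigma)^2 W_0 \leq W \leq (1+\sigma)^2 W_0$. This immediately establishes \ref{assump:quadratic_growth_around_wells}, and since $W$ and $W_0$ share the zero set $\{A, B\}$, it also shows that $W$ is a double-well potential.

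For the inequality $K^* < 3\,d_W(A, B)$, my plan is to sandwich both quantities by their $W_0$-counterparts through the straight-line curve
\[
    \varphi_0 \colon [-1, 1] \to \R^{2 \times 2}, \qquad \varphi_0(s) = (0, -sa),
\]
which connects $A$ and $B$. For the upper bound I would construct a one-dimensional recovery sequence $u_\eps(x_1, x_2) = v_\eps(x_2)$, where $v_\eps'$ traces $\varphi_0$ on the $\eps$-scale via the standard equipartitioned Modica--Mortola ansatz. Passing to the limit and applying the perturbation bound gives
\[
    K^* \leq L_W(\varphi_0) = 2|a|\int_{-1}^{1}\sqrt{W((0, -sa))}\,ds \leq (1+\sigma)\,L_{W_0}(\varphi_0).
\]
For the lower bound I apply $\sqrt{W} \geq (1-\sigma)\sqrt{W_0}$ pointwise inside the definition of $L_W$ and take the infimum over admissible curves to obtain $d_W(A, B) \geq (1-\sigma)\,d_{W_0}(A, B)$.

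The core step is to identify $\varphi_0$ as a $W_0$-geodesic, i.e.\ to show $L_{W_0}(\varphi_0) = d_{W_0}(A, B) = 2|a|^2$. A direct computation yields $L_{W_0}(\varphi_0) = 2|a|^2$. For the matching lower bound, any admissible curve $\varphi = (m_1, m_2) \in W^{1,1}([-1,1]; \R^{2\times 2})$ from $A$ to $B$ satisfies $\sqrt{W_0(\varphi)} \geq \min|m_2 \pm a|$ and $|\varphi'| \geq |m_2'|$, hence
\[
    L_{W_0}(\varphi) \geq 2\int_{-1}^{1} \min|m_2 \pm a|\cdot|m_2'|\,ds.
\]
After rotating $\R^2$ so that $a = (0, |a|)$ and letting $\xi$ denote the second component of $m_2 \in \R^2$, one further bounds $\min|m_2 \pm a| \geq \bigl||\xi| - |a|\bigr|$ and $|m_2'| \geq |\xi'|$. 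Since $\xi$ is continuous with $\xi(-1) = |a|$ and $\xi(1) = -|a|$, the intermediate value theorem combined with the coarea formula gives
\[
    \int_{-1}^{1} \bigl||\xi| - |a|\bigr|\cdot|\xi'|\,ds \geq \int_{-|a|}^{|a|}(|a| - |y|)\,dy = |a|^2,
\]
so $d_{W_0}(A, B) \geq 2|a|^2$. Combining everything yields
\[
    \frac{K^*}{d_W(A, B)} \leq \frac{(1+\sigma)\,L_{W_0}(\varphi_0)}{(1-\sigma)\,d_{W_0}(A, B)} = \frac{1+\sigma}{1-\sigma} < 3,
\]
where the strict inequality uses exactly $\sigma < 1/2$. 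The main obstacle I expect is the coarea/crossing argument establishing $d_{W_0}(A, B) \geq 2|a|^2$; once this is in hand, all remaining steps are routine.
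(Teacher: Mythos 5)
Your proof is correct and uses the same basic ingredients as the paper (one-dimensional dimension reduction, the Modica--Mortola/Young inequality linking energy and $W$-length, identification of the $W_0$-geodesic as the straight segment, and the perturbation bound $\tfrac{1+\sigma}{1-\sigma}<3$), but it organizes them in a more self-contained way. The paper introduces the abstract one-dimensional cell constant $K_*$ and cites the literature twice: once for $K^* \le K_*$ together with the integral characterization of $K_*$ \cite[Section 5]{ContiFonsecaLeoni2002}, and once for the reparametrization identity $K_* = d_{\tilde W}(A,B)$ \cite[Proposition 3.2]{Baldo1990}, and it also appeals to \cite[Proposition 3.2]{CristoferiGravina2021} for the fact that the straight segment is a $W_0$-geodesic. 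You instead feed the specific straight-line curve $\varphi_0$ directly into the one-dimensional recovery-sequence construction, which yields $K^* \le L_W(\varphi_0)$ without needing the full cell-formula-equals-geodesic equality (only the upper-bound direction of Modica--Mortola is used), and you supply an explicit coarea/crossing argument that $d_{W_0}(A,B) = 2|a|^2 = L_{W_0}(\varphi_0)$, which also dispenses with the paper's intermediate identity $d_{\tilde W_0}(A,B) = d_{W_0}(A,B)$. The trade-off is that your version requires carrying out the equipartitioned 1D recovery-sequence construction (truncating the tails, verifying $L^1$ convergence to $u_0$, etc.), whereas the paper shifts that work to a citation. Two small remarks: the pointwise bound $\min|m_2\pm a|\ge \bigl||\xi|-|a|\bigr|$ after rotation is correct, and your final line gives the \emph{strict} inequality $K^* < 3\,d_W(A,B)$ cleanly (since $d_W(A,B)>0$), whereas the paper's last displayed line only records $K^*\le 3\,d_W(A,B)$, which is presumably a typo given the strict bound $\tfrac{1+\sigma}{1-\sigma}<3$ used just before.
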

\begin{proof}
    We first note that $W(M) = 0$ is equivalent to $W_0(M) = 0$ due to $\sigma < 1$ and assumption \eqref{eq:nearly_quadratic}. Also, \ref{assump:quadratic_growth_around_wells} is a direct consequence, since by \eqref{eq:nearly_quadratic}  we have
    \[
        (1-\sigma)^2W_0 \leq W \leq (1+\sigma)^2W_0.
    \]
    To show $K^* < 3d_W(A,B)$, we set
    \[
        \tilde W(M) := W(0, m_2) \qquad \text{ and } \qquad \tilde W_0(M) := W_0(0, m_2)
    \]
    and
    \[
        K_* = \inf\left\{ \liminf_{n \to \infty} \int_{(-1/2,1/2)} \frac{1}{\eps_n} \tilde W(u_n'(s)) + \eps_n |u_n''(s)|^2 \, ds: \, \eps_n \to 0^+, \, u_n \in H^2(q; \R^2), \,  u_n \to u_0 \text{ in } L^1(q, \R^2) \right\}.
    \]
    Note also that this cell formula has been extensively discussed in \cite[Section 5]{ContiFonsecaLeoni2002}. It has been shown that 
    \[
        K^* \leq K_* = \inf\left\{ \int_{-L}^L \tilde W(g(s)) + |g'(s)|^2 \, ds : L > 0; \, g \in W^{1,1}((-L,L); \R^2), \, g(L) = -g(-L) = a \right\}.
    \]
    By the classical Modica--Mortola reparametrization argument (cf.\ \cite[Proposition 3.2]{Baldo1990} or \cite[Lemma 4.5]{CristoferiGravina2021}) we have 
    \begin{align}
        K_* = d_{\tilde W}(A,B)
    \end{align}
    from which we can deduce 
    \begin{align}\label{eq:inequality_geodesic_dist_cell_formula}
        K^* \leq d_{\tilde W}(A,B).
    \end{align}
    Furthermore, we can observe that
    \begin{align}\label{eq:1D_equals_2D_problem}
        d_{\tilde W_0}(A,B) = d_{W_0}(A,B).
    \end{align}
    holds since $W_0$ fulfils 
    $$
        W_0(M) \geq W_0(0,m_2) = \tilde W_0(M).
    $$ 
    Indeed, take any curve $\varphi \in W^{1,1}([-1,1]; \R^{2 \times 2})$ and notice that 
    \[
        L_{W}((0, \varphi_2)) \leq L_W(\varphi).
    \]
    In particular, we can set the first row of $\varphi$ always to $0$ when taking the infimum over all curves $W^{1,1}([-1,1]; \R^{2 \times 2})$ with $\varphi(-1) = -\varphi(1) = A = a \otimes e_2$ which gives exactly \eqref{eq:1D_equals_2D_problem}. 
    One can also show that since $W_0$ is exactly a quadratic potential, the geodesic from $A$ to $B$ with respect to $W_0$ is just the straight line segment joining $A$ and $B$, which is of the form $(0, ta)$.  Indeed, this can be observed by an application of the co-area formula, which can be found in the proof of \cite[Proposition 3.2]{CristoferiGravina2021}. This also directly implies \eqref{eq:1D_equals_2D_problem}. Now, we just note that for $\varphi \in W^{1,1}([-1, 1], \R^{2})$ we can apply \ref{assump:quadratic_growth_around_wells} to observe
    \[
        \int_{-1}^12\sqrt{W(0, \varphi)}|\varphi'| \, ds \leq (1+ \sigma)\int_{-1}^12\sqrt{W_0(0, \varphi)}|\varphi'| \, ds
    \]
    and for $\psi \in W^{1,1}([-1, 1], \R^{2 \times 2})$
    \[
        \int_{-1}^12\sqrt{W_0(\psi)}|\psi'| \, ds \leq \frac{1}{1 - \sigma} \int_{-1}^12\sqrt{W(\psi)}|\psi'| \, ds.
    \]
    Therefore, we can take the corresponding infimum of the last two inequalities to derive 
    \[
         d_{\tilde W}(A,B) \leq (1 + \sigma)d_{\tilde W_0}(A,B) = (1 + \sigma)d_{W_0}(A,B) \leq \frac{1+ \sigma}{1 - \sigma}d_W(A,B).
    \]
    Since $\sigma < 1/2$ we have 
    \[
        \frac{1+ \sigma}{1 - \sigma} < 3.
    \]
    With this and $\eqref{eq:inequality_geodesic_dist_cell_formula}$ we can conclude 
    \[
        K^* \leq 3d_W(A,B).
    \]
\end{proof}

\appendix

\section{}
Here we will state a short combinatorial lemma:
\begin{lemma}\label{lemma:combinatorics_lemma}
    Suppose that for $n \in \N$ we have finite non-negative sequences $(a_k)_{k = 1}^n, (b_k)_{k = 1}^n$ and $(c_k)_{k = 1}^n$ with the property
    \[
        \sum_{k = 1}^n a_k \leq C_a \qquad \sum_{k = 1}^n b_k \leq C_b \qquad \sum_{k = 1}^n c_k \leq C_c
    \]
    for constants $C_a, C_b$ and $C_c$. Then, for each $\tau > 1$ we find a $k_0 \in \{1, .., n\}$ and $C_\tau > 0$ such that 
    \[
        a_{k_0} \leq \frac{\tau C_a}{n} \qquad b_{k_0} \leq \frac{2\tau C_b}{(\tau - 1)n} \qquad c_{k_0} \leq \frac{2\tau C_c}{(\tau - 1)n}.
    \]
    \begin{proof}
        We just note that 
        \[
            \sum_{k = 1}^n \frac{1}{\tau C_a}a_k + \frac{(\tau - 1)}{2\tau C_b}b_k + \frac{\tau - 1}{2\tau C_c}c_k \leq 1.
        \]
        Therefore, we can find a $k_0 \in \{1 , .., n\}$ such that 
        \[
            \frac{1}{\tau C_a}a_{k_0} + \frac{(\tau - 1)}{2\tau C_b}b_{k_0} + \frac{\tau - 1}{2\tau C_c}c_{k_0} \leq \frac{1}{n}.
        \]
        Since the coefficients are all non-negative the statement follows.
    \end{proof}
\end{lemma}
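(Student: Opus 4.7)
The statement is a standard pigeonhole/averaging argument, and the hint in the excerpt already telegraphs the idea. My plan is to normalise the three sequences by their bounds with carefully chosen weights so that the sum of weighted entries is bounded by $1$, and then extract a single index where each weighted term is individually small.

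Concretely, I would set
\[
    \alpha := \frac{1}{\tau C_a}, \qquad \beta := \frac{\tau-1}{2\tau C_b}, \qquad \gamma := \frac{\tau-1}{2\tau C_c},
\]
and consider the weighted sum $\sum_{k=1}^n (\alpha a_k + \beta b_k + \gamma c_k)$. Using the three hypothesised sum bounds, one computes
\[
    \sum_{k=1}^n (\alpha a_k + \beta b_k + \gamma c_k) \le \frac{C_a}{\tau C_a} + \frac{(\tau-1)C_b}{2\tau C_b} + \frac{(\tau-1)C_c}{2\tau C_c} = \frac{1}{\tau} + \frac{\tau-1}{\tau} = 1.
\]
Hence the average of these $n$ non-negative numbers is at most $1/n$, so by the pigeonhole principle there exists $k_0 \in \{1,\dots,n\}$ with
\[
    \alpha a_{k_0} + \beta b_{k_0} + \gamma c_{k_0} \le \frac{1}{n}.
\]

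Finally, since each of the three summands on the left is non-negative, each one is individually bounded by $1/n$. Dividing through by $\alpha$, $\beta$, $\gamma$ respectively yields exactly the three required bounds $a_{k_0} \le \tau C_a/n$, $b_{k_0} \le 2\tau C_b/((\tau-1)n)$ and $c_{k_0} \le 2\tau C_c/((\tau-1)n)$. There is no genuine obstacle here; the only choice to make is the weights, which are essentially forced by matching the numerators on the right-hand sides of the desired inequalities. The constant $C_\tau > 0$ mentioned in the statement plays no active role — the three explicit bounds are produced directly.
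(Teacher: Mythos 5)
Your proposal is correct and is essentially identical to the paper's proof: the same weights $1/(\tau C_a)$, $(\tau-1)/(2\tau C_b)$, $(\tau-1)/(2\tau C_c)$, the same averaging/pigeonhole step giving an index $k_0$ with weighted sum at most $1/n$, and the same final observation that non-negativity lets you bound each term separately. Your remark that the stated constant $C_\tau$ plays no active role is also accurate.
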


Here, we present a short auxiliary lemma which is used for modifying optimal profiles at their boundary.

\begin{lemma}\label{app:estimate_of_interpolation}
Let $I = I_0 \cup I_1$ with intervals $I_0 = (0, \delta/m), I_1 = [\delta/m, 2\delta/m)$ for $\delta > 0$ and $m \in \N$. Moreover, let $u \in H^2(I_*, \R^2)$, $\eps > 0$ and $M > 0$ such that there exists $s \in I_0$ with 
\begin{align*}
    \int_{(-\frac 1 2, \frac 1 2)} \frac 1 {\eps} W(\nabla u(s, t)) + \eps |\nabla^2 u(s, t)|^2 + \frac 1 {\delta^2} |(\nabla u - \nabla u_0)(s, t)|^2 + |(u - u_0)(s, t)|  \, dt < M,
\end{align*}
\begin{align*}
    E_{\eps}(u, I) +  \int_{(I_1)_*}\frac 1 {\delta^2} |\nabla u - \nabla u_0|^2 + |u - u_0|  \, dx < \frac{\delta}{m} M,
\end{align*}
and $\varphi \in C^\infty(\R; [0,1])$ with $\varphi(0) = 1$, $\varphi(1) = 0$ and
\[
    |\varphi'| \leq M \frac{\delta}{m} \text{ and } |\varphi''| \leq M\frac{\delta^2}{m^2}.
\]
Then there exists a constant $C_M > 0$ such that the map 
\[
    w(x) = \varphi(x_1)u(x_1,x_2) + (1-\varphi(x_1))u(s, x_2)
\]
fulfils 
\[
    E_\eps(w, I) \leq C_M \left(\delta + \frac{\delta}{\eps m}\right),
\]
and, as a consequence, if $\eps m \geq 1/M$ we have 
\[
    E_\eps(w, I) \leq 2C_M \delta.
\]
\end{lemma}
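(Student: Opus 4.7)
The plan is to bound $E_\eps(w, I_*)$ by decomposing $I$ into the subregion where $\varphi$ is constant and the transition region where $\varphi' \neq 0$. First I would compute
\begin{align*}
    \partial_1 w &= \varphi'(x_1)\bigl(u(x_1, x_2) - u(s, x_2)\bigr) + \varphi(x_1)\,\partial_1 u(x_1, x_2),\\
    \partial_2 w &= \varphi(x_1)\,\partial_2 u(x_1, x_2) + (1 - \varphi(x_1))\,\partial_2 u(s, x_2),
\end{align*}
together with the analogous expressions for $\partial_{11} w$, $\partial_{12} w$, $\partial_{22} w$ in terms of $\varphi, \varphi', \varphi''$ and of $u$, $u(s,\cdot)$.

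On the subregion where $\varphi \equiv 0$ one has $w(x_1, x_2) = u(s, x_2)$, so $\nabla w = (0, \partial_2 u(s, x_2))$ differs from $\nabla u(s, \cdot)$ only in the first column. Applying \eqref{assump:quadratic_lipschitz} with $N = \nabla u(s, \cdot)$ and then \eqref{assump:quadratic_from_below_first_variable} gives the pointwise bound $W(\nabla w) \leq C\,W(\nabla u(s, \cdot))$. Integrating in $x_1$ contributes at most the length of this subregion (at most $2\delta/m$), and the $x_2$-integral is absorbed by the trace hypothesis; this produces a term of order $C\delta M/(\eps m)$ in $(1/\eps)\int W(\nabla w)$. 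The $\eps\int|\partial_{22}w|^2$ contribution is controlled analogously by the $\eps|\nabla^2 u(s,\cdot)|^2$ part of the trace bound, giving $C\delta M/m$. On the subregion where $\varphi \equiv 1$, $w$ agrees with $u$ so the contribution is bounded directly by $E_\eps(u, I) \leq \delta M/m$.

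The key step is handling the transition region, where every mixed term is active. I would use the fundamental theorem of calculus and Cauchy--Schwarz to get
\[
    |u(x_1, x_2) - u(s, x_2)|^2 \leq \tfrac{2\delta}{m}\int_{I}|\partial_1 u(t, x_2)|^2\,dt
\]
and an analogous bound for $|\partial_2 u(x_1, x_2) - \partial_2 u(s, x_2)|^2$ in terms of $\int_I |\partial_{12} u(t, x_2)|^2\, dt$. Pairing these with the cutoff bounds (with the natural scaling $|\varphi'| \leq Cm/\delta$, $|\varphi''| \leq Cm^2/\delta^2$ appropriate for a cutoff that transitions across an interval of length $\delta/m$) and with \eqref{assump:quadratic_from_below_first_variable}, which gives $\int_I |\partial_1 u|^2\, dx \leq C\eps E_\eps(u, I) \leq C\eps\delta M/m$, each cross term in $\partial_1 w$, $\partial_{11} w$, $\partial_{12} w$ can be controlled. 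Collecting all contributions produces the estimate $E_\eps(w, I_*) \leq C_M(\delta + \delta/(\eps m))$, and the second assertion follows immediately since $\eps m \geq 1/M$ implies $\delta/(\eps m) \leq M\delta$. The main obstacle will be the careful bookkeeping: each mixed term coming from differentiating $\varphi u + (1-\varphi) u(s,\cdot)$ twice must be paired with the correct hypothesis, so that the $m^k/\delta^k$ powers from $\varphi^{(k)}$ cancel exactly against the $\delta/m$ length of $I$ and the factors produced by the FTC.
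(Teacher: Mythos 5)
Your proof is correct and follows essentially the same route as the paper's: compute the derivatives of $w$, bound $W(\nabla w)$ pointwise via the quadratic growth hypotheses in terms of $|\varphi'|^2|u - \bar u|^2$ and the deviations of $\nabla u$, $\nabla \bar u$ from $\nabla u_0$, and control $|u(x_1,\cdot) - u(s,\cdot)|$ by a Poincar\'e/fundamental-theorem-of-calculus argument that produces the compensating $\delta/m$ factor; the explicit decomposition into the regions $\varphi \equiv 0$, $\varphi \equiv 1$, and the transition zone is only an organizational variant of the paper's single uniform estimate. You are also right to flag the cutoff scaling: for the proof (and the application in Lemma~\ref{lemma:horizontal_boundary_condition}) the hypotheses must read $|\varphi'| \leq Mm/\delta$ and $|\varphi''| \leq Mm^2/\delta^2$, as the bounds written in the statement are incompatible with $\varphi(0)=1$, $\varphi(1)=0$ once $m$ is large.
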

\begin{proof}
    We first compute the first derivative of $w$:
    \begin{align*}
        \nabla w(x_1, x_2) = (\varphi'(x_1)(u(x_1, x_2) - u(s,x_2)), 0) + \varphi(x_1)\nabla u(x_1,x_2) + (1- \varphi(x_1)) (0, \partial_2 u(s, x_2)).
    \end{align*}
    By our growth assumptions \ref{assump:quadratic_growth_around_wells}, we have  
    \begin{align*}
        W(\nabla w) \leq C\left(1 + \frac{m^2}{\delta^2} |u - \bar u|^2 + |\nabla u - \nabla u_0|^2 + |\overline{\nabla u} - \nabla u_0|^2 \right),
    \end{align*}
    where we have used the short notation  $\overline u(x_1, x_2) =u(s,x_2)$ and $\overline{\nabla u}(x_1, x_2) = \nabla u(s, x_2)$.
    To estimate the difference $u - \bar u$, we apply Poincaré inequality, and we observe
    \begin{align*}
        \int_{I_*} |u - \bar u|^2 \, dx &\leq C\frac{\delta^2}{m^2}\int_{I_*} |\partial_1 (u - \bar u)|^2 \, dx \\
        &\leq  C\frac{\delta^2}{m^2}\int_{I_*} |\nabla (u - u_0)|^2 + |\overline{\nabla u} - \nabla u_0|^2 \, dx \\
        &\leq C_M\frac{\delta^3}{m^3}.
    \end{align*}
    For the other terms, we only need to observe
    \begin{align*}
        \frac{1}{\eps}|I_*| \leq C\frac{\delta}{\eps m}.
    \end{align*}
    By our assumptions and the last two inequalities, we obtain 
    \begin{align*}
        \frac{1}{\eps} \int_{I_*} W(\nabla w) \, dx \leq C_M \frac{\delta}{m\eps}.
    \end{align*}
    Similarly, we can estimate 
    \begin{align*}
        |\nabla^2 w|^2 \leq C\left( |\nabla^2 u|^2 + |\nabla^2 \bar u|^2 + \frac{m^4}{\delta^4}|u - \bar u|^2 + \frac{m^2}{\delta^2}|\nabla u - \nabla \bar u|^2 \right). 
    \end{align*}
    In the exact same fashion as before, the last two terms can be estimated by a Poincaré inequality, with which we can derive 
    \begin{align*}
        \eps \int_{I_*} |\nabla^2 w|^2 \, dx \leq C_M\left( \frac{\delta}{m} + \frac{\delta}{m\eps} \right).
    \end{align*}
    Since the as a consequence' part of the Lemma is an immediate deduction, we conclude.
\end{proof}

\section*{Acknowledgement:} The author gratefully acknowledges Elisa Davoli and Manuel Friedrich for their continued support and for the many valuable discussions during the preparation of this paper. Furthermore, he wishes to thank the inspiring conversations with Kerrek Stinson that helped shape and develop the idea for this paper.

\newpage 
\printbibliography

\end{document}